\DeclareSymbolFontAlphabet{\mathbbl}{bbold}
\DeclareSymbolFontAlphabet{\mathbb}{AMSb}%
 \newtheorem{thm}{Theorem}[section]
 \newtheorem{lem}[thm]{Lemma}
\theoremstyle{definition}
 \theoremstyle{remark}
 \newtheorem{rem}[thm]{Remark}
\DeclareRobustCommand\widecheck[1]{{\mathpalette\@widecheck{#1}}}
\def\@widecheck#1#2{%
    \setbox\z@\hbox{\m@th$#1#2$}%
    \setbox\tw@\hbox{\m@th$#1%
       \widehat{%
          \vrule\@width\z@\@height\ht\z@
          \vrule\@height\z@\@width\wd\z@}$}%
    \dp\tw@-\ht\z@
    \@tempdima\ht\z@ \advance\@tempdima2\ht\tw@ \divide\@tempdima\thr@@
    \setbox\tw@\hbox{%
       \raise\@tempdima\hbox{\scalebox{1}[-1]{\lower\@tempdima\box
\tw@}}}%
    {\ooalign{\box\tw@ \cr \box\z@}}}
\numberwithin{equation}{section}
\begin{document}

\title[]{Maximal operator, Littlewood-Paley functions and variation operators associated with nonsymmetric Ornstein-Uhlenbeck operators.}

\author[V. Almeida]{V\'{\i}ctor Almeida}

\author[J.J. Betancor]{Jorge J. Betancor}

\author[P. Quijano]{Pablo Quijano}

\author[L. Rodr\'{\i}guez-Mesa]{Lourdes Rodr\'{\i}guez-Mesa}

\address{V\'{\i}ctor Almeida, Jorge J. Betancor, Lourdes Rodr\'{\i}guez-Mesa\newline
	Departamento de An\'alisis Matem\'atico, Universidad de La Laguna,\newline
	Campus de Anchieta, Avda. Astrof\'isico S\'anchez, s/n,\newline
	38721 La Laguna (Sta. Cruz de Tenerife), Spain}
\email{valmeida@ull.edu.es, jbetanco@ull.es,
lrguez@ull.edu.es
}

\address{Pablo Quijano\newline
	Instituto de Matemática Aplicada del Litora, Santa Fe-Argentina}
\email{pabloquijanoar@gmail.com}

\thanks{The authors are partially supported by grant PID2019-106093GB-I00 from the Spanish Government}

\subjclass[2020]{47B90, 42B25, 42B20}

\keywords{Maximal operators, Littlewood-Paley functions,variation operator, nonsymmetric Ornstein-Uhlenbeck.}

\date{}

\begin{abstract}
In this paper we establish $L^p$ boundedness properties for maximal operators, Littlewood-Paley functions and variation operators involving Poisson semigroups and resolvent operators associated with nonsymmetric Ornstein-Uhlenbeck operators. We consider the Ornstein-Uhlenbeck operators defined by the identity as the covariance matrix and having a drift given by the matrix $-\lambda(I+R)$, being $\lambda>0$ and $R$ a skew-adjoint matrix. The semigroup associated with these Ornstein-Uhlenbeck operators are the basic building blocks of all the normal Ornstein-Uhlenbeck semigroups.
\end{abstract}

\maketitle

\section{Introduction}

In this paper we are concerned with maximal operators, Littlewood-Paley functions and variation operators defined by Poisson semigroups and resolvent operators for nonsymmetric Ornstein-Uhlenbeck operators.

We denote by $Q$ a real, symmetric and positive definite $d\times d$ matrix and by $B$ a nonzero real $d\times d$ matrix having eigenvalues with negative real parts, being $d\in \mathbb{N}$, $d\geq 1$. We now introduce the Ornstein-Uhlenbeck semigroup defined by $Q$, named the covariance matrix, and $B$ called the drift matrix. For every $t\in(0,\infty]$ we consider the matrix $Q_t$ given by
\begin{equation*}
    Q_t = \int_0^t e^{sB} Q e^{sB^*}ds,
\end{equation*}
and the Gaussian measure $\gamma_t$ on $\mathbb R^d$ having mean zero and covariance matrix $Q_t$ defined by
\begin{equation*}
    d\gamma_t(x) = (2\pi)^{-\frac{d}{2}}(\det Q_t)^{-\frac{1}{2}}
    e^{-\frac{1}{2}\langle Q^{-1}_t x, x\rangle} dx.
\end{equation*}

The Ornstein-Uhlenbeck semigroup is $\{\mathcal{H}^{Q,B}_t\}_{t>0}$, where, for every $t>0$, 
\begin{equation}\label{1.1}
    \mathcal{H}^{Q,B}_t(f)(x) = \int_{\mathbb R^d} f\left( 
    e^{tB}x-y\right) d\gamma_t(y), \;\;\; x\in\mathbb R^d,
\end{equation}
where $f$ belongs to the space $\mathcal{C}_b(\mathbb R^d)$ of bounded continuous functions in $\mathbb R^d$.

The semigroup $\{\mathcal{H}^{Q,B}_t\}_{t > 0}$ is the transition semigroup of the Ornstein-Uhlenbeck process on $\mathbb R^d$ (\cite{DaPZ}). The measure $\gamma_\infty$ is the unique invariant measure for $\mathcal{H}_t^{Q,B}$, $t>0$. Furthermore, the equality~\eqref{1.1} defines a semigroup of positive contractions in $L^p(\mathbb R^d,\gamma_\infty)$, for every $1\leq p <\infty$.

The Ornstein-Uhlenbeck operator $\mathcal{L}^{Q,B}$ is defined by
\begin{equation*}
    \mathcal{L}^{Q,B} f(x) = \frac{1}{2} \text{tr}(Q \nabla^2 f)(x) +
    \langle Bx, \nabla f(x)\rangle, \;\;\; x\in\mathbb R^d,
\end{equation*}
when $f\in \mathcal{C}^2(\mathbb R^d)$, the set of differentiable functions with continuity up to order two . Here, $\nabla$ denotes the gradient and $\nabla^2$ represents the Hessian. Thus, $-\mathcal{L}^{Q,B}$ is an elliptic operator. If $1\leq p < \infty$, by naming $-\mathcal{L}^{Q,B}_p$ the infinitesimal generator of $\{\mathcal{H}^{Q,B}_t\}_{t > 0}$ in $L^p(\mathbb R^d,\gamma_\infty)$, we have that $\mathcal{L}^{Q,B}_p f = \mathcal{L}^{Q,B}f$, $f\in S(\mathbb R^d)$, the set of Schwartz functions,  and $S(\mathbb R^d)$ is dense in the domain $\mathcal{D}(\mathcal{L}_p^{Q,B})$ of $\mathcal{L}_p^{Q,B}$. In~\cite{MPRS}, it was proved that $\mathcal{D}(\mathcal{L}_p^{Q,B})$ coincides with the Sobolev space $W^{2,p}(\gamma_\infty)$.

Harmonic analysis associated with the symmetric Ornstein-Uhlenbeck operator has been much investigated over the last twenty five years. When $Q = -B = I$, where $I$ denotes the identity matrix, the operator $\mathcal{L}$ reduces to the classical Ornstein-Uhlenbeck operator $\mathcal{L}^{I,-I} = \frac{1}{2} \Delta - x \nabla$ and the Hermite polynomials are an orthonormal basis in $L^2(\mathbb R^d, \gamma_\infty)$ of eigenfunctions of $\mathcal{L}^{I,-I}$. Muckenhoupt (\cite{Mu1}) studied maximal operator and Riesz transforms in the one dimensional $\mathcal{L}^{I,-I}$-setting . Sj\"ogren (\cite{Sj}) extended to higher dimensions Muckenhoupt's results about the maximal operator defined by $\{\mathcal{H}^{I,-I}_t\}_{t>0}$. Harmonic analysis operators associated with $\mathcal{L}^{I,-I}$ were studied in~\cite{GCMMST2} and~\cite{MPS} (maximal operators); in~\cite{FSU},~\cite{P2} and~\cite{PS} (Littlewood-Paley functions); in~\cite{FHS}, \cite{GCMST1}, \cite{P1} and~\cite{PS} (Riesz transforms); in~\cite{GCMMST1} and~\cite{GCMST2} (spectral multipliers) and in~\cite{HMMT} (variation and oscillation operators). Guti\'errez, Segovia and Torrea (\cite{GST}) and Guti\'errez (\cite{G}) studied Riesz transforms defined by the operator $\mathcal{L}^{I,B}$ when $B$ is symmetric.

Mauceri and Noselli proved $L^p$ boundedness properties for maximal operators (\cite{MN1}) and Riesz transforms (\cite{MN2}) when $Q = I$ and $B=-\lambda(I+R)$, with $\lambda >0$ and $R$ is a skew-adjoint matrix. The semigroups $\{\mathcal{H}^{I,-\lambda(I+R)}_t\}_{t>0}$ are the basic building blocks of normal Ornstein-Uhlenbeck semigroups because, after a change of variables, any normal Ornstein-Uhlenbeck semigroup can be written as a product of commuting semigroups of that form.

Recently, Casarino, Ciatti and Sj\"ogren (\cite{CCS2}, \cite{CCS1} and \cite{CCS3}) have extended the results about maximal operators and Riesz transforms due to Mauceri and Noselli (\cite{MN2} and~\cite{MN1}).

Our objective in this paper is to establish $L^p$ boundedness properties of some maximal operators, Littlewood-Paley functions and variation operators involving the Poisson semigroups and the resolvent operators associated with the nonsymmetric Ornstein-Uhlenbeck operator considered by Mauceri and Noselli  (\cite{MN2} and~\cite{MN1}).

Assume that $Q = I$ and $B = -\lambda(I + R)$ where $\lambda > 0$ and $R$ is a skew-adjoint matrix as in~\cite{MN2} and~\cite{MN1}. After making a change of variables in~\eqref{1.1} we get
\begin{equation*}
    \mathcal{H}^{Q,B}_t (f)(x) = 
    \int_{\mathbb{R}^d} \widetilde{h}^{Q,B}_t (x,y) f(y) d\gamma_{\infty,\lambda} (y), \quad  x\in\mathbb{R}^d\text{ and }t>0,
\end{equation*}
where $d\gamma_{\infty,\lambda} (y) = 
\left(\frac{\pi}{\lambda }\right)^{-d/2} e^{-\lambda |y|^2}dy$, and
\begin{equation*}
     \widetilde{h}^{Q,B}_t (x,y) = \det(Q_\infty Q^{-1}_t)^{\frac{1}{2}}
    e^{-\frac{1}{2} \left[ 
    \langle Q^{-1}_t (e^{tB}x - y), e^{tB}x - y\rangle 
    - \langle Q^{-1}_\infty y, y\rangle
    \right] },
\end{equation*}
for $x$, $y\in \mathbb{R}^d$ and $t>0$. 

By using the subordination formula, the Poisson semigroup $\{P^{Q,B}_t\}_{t>0}$ is given by
\begin{equation*}
    P^{Q,B}_t(f) = \frac{t}{2\sqrt{\pi}} \int_0^{\infty} \frac{e^{-\frac{t^2}{4u}}}{u^{\frac{3}{2}}} \mathcal{H}^{Q,B}_u (f)du, \quad t>0.
\end{equation*}
Let $k\in\mathbb{N}$ and $j=1,\dots,d.$ We consider the maximal operator $P^{Q,B}_{*,k,j}$ defined by
\begin{equation*}
    P^{Q,B}_{*,k,j} (f) = 
    \sup_{t>0} |t^{k+1} \partial^k_t \partial_{x_j} P^{Q,B}_t (f)|.
\end{equation*}

The Littlewood-Paley $g^{Q,B}_{k,j}$ is given by
\begin{equation*}
    g^{Q,B}_{k,j} (f) = 
    \left( \int_0^\infty 
    \left| t^{k+1} \partial^k_t \partial_{x_j} P^{Q,B}_t (f)\right|^2 \frac{dt}{t} \right)^{\frac{1}{2}}.
\end{equation*}

Let $\rho>2$. If $g$ is a complex valued function defined in $(0,\infty)$, the $\rho$-variation $V_\rho(g)$ of $g$ is defined by
\begin{equation}\label{1.2}
    V_\rho(g) = \sup_{\substack{ 0<t_\ell<t_{\ell-1}<t_1\\ \ell \in \mathbb{N}}} \left( \sum_{n=1}^{\ell-1} 
    |g(t_{n+1}) - g(t_n)|^\rho \right)^{\frac{1}{\rho}}.
\end{equation}

Variation inequalities have been studied in probability, ergodic theory and harmonic analysis in recent years, The first variation inequality was due to L\'eplinge (\cite{le}) in the martingales setting. Later, Bourgain (\cite{Bo}) studied variation operators associated with ergodic averages of dynamic systems. The last paper has motivated a lot of researches in ergodic theory and harmonic analysis. We recommend to the interested reader the following recent papers and the reference therein: \cite{BMSW}, \cite{MTX1}, \cite{MTX2}, \cite{MSZ}, \cite{MTZ} and \cite{Z}.

We consider the variation operator $V^{Q,B}_{\rho,k,j}$ given by
\begin{equation*}
    V^{Q,B}_{\rho,k,j}(f)(x) = 
    V_\rho \left( 
    t^{k+1} \partial^k_t \partial_{x_j} P^{Q,B}_t (f)(x)
    \right),\quad x\in\mathbb{R}^d.
\end{equation*}

Note that, for every $f\in L^p(\mathbb R^d,\gamma_{\infty,1})$, $1\leq p<\infty$, the function $ V^{Q,B}_{\rho,k,j}(f)$ is a Lebesgue measurable  function because, for almost everywhere $x\in\mathbb R^d$, the function $F_x(t)=t^{k+1}\partial_t^k\partial_{x_j}P_t^{Q,B}(f)(x)$, $t\in(0,\infty)$, is continuous (see the comments after \cite[Theorem 1.2]{CJRW}). This measurability  property also holds for the other variation operators considered in this paper.

$g$-Littlewood-Paley functions associated with the symmetric Ornstein-Uhlenbeck operator $\mathcal{L}^{I,-I}$ were studied in~\cite{FSU},~\cite{P2} and~\cite{PS}. The $\rho$-variation operator for the Poisson semigroup $\{ P^{I,-I}_t\}$ defined by the symmetric Ornstein-Uhlenbeck operator without any derivatives was considered in~\cite{HMMT}.

Let $k\in\mathbb{N}$, $j=1,\dots,d$, and $M\geq 1$. We consider the following maximal operators, Littlewood-Paley functions and variation operators involving resolvent operators of $\mathcal{L}^{Q,B}$ defined by
\begin{equation*}
    S^{Q,B}_{*,k,j,M} (f) = 
    \sup_{t>0} |t^{k+\frac{1}{2}} \partial^k_t \partial_{x_j} (I + t\mathcal{L}^{Q,B})^{-M}(f)|,
\end{equation*}

\begin{equation*}
    G^{Q,B}_{k,j,M} (f) = 
    \left( \int_0^\infty 
    \left| t^{k+\frac{1}{2}} \partial^k_t \partial_{x_j} (I + t\mathcal{L}^{Q,B})^{-M} (f)\right|^2 \frac{dt}{t} \right)^{\frac{1}{2}}, 
\end{equation*}
and
\begin{equation*}
    \mathcal{V}^{Q,B}_{\rho,k,j,M}(f)(x) = 
    V_\rho \left( 
    t^{k+\frac{1}{2}} \partial^k_t \partial_{x_j} (I + t\mathcal{L}^{Q,B})^{-M}(f)(x)
    \right),\quad x\in\mathbb{R}^d.
\end{equation*}
Square functions of $G$-type have been recently defined in other settings (\cite{FMcP}). 

Our main result is the following one.

\begin{thm}\label{Th1.1}
Let $k\in\mathbb{N}$, $M>(d+1)/2$, $j=1,\dots,d$ and $\rho>2$.
Assume that $Q = I$ and $B = -\lambda(I+R)$, where $\lambda>0$ and $R$ is a skew-adjoint matrix that generates a periodic one parameter group $\{e^{tR}\}_{t\in\mathbb{R}}$. Then, the operators $P^{Q,B}_{*,k,j}$, $g^{Q,B}_{k,j}$, $V^{Q,B}_{\rho,k,j}$, $S^{Q,B}_{*,k,j,M}$, $G^{Q,B}_{k,j,M}$ and $\mathcal{V}^{Q,B}_{\rho,k,j,M}$ are bounded from $L^p(\mathbb{R}^d, \gamma_{\infty,\lambda})$ into $L^p(\mathbb{R}^d, \gamma_{\infty,\lambda})$ and from $L^1(\mathbb{R}^d, \gamma_{\infty,\lambda})$ into $L^{1,\infty}(\mathbb{R}^d, \gamma_{\infty,\lambda})$.
\end{thm}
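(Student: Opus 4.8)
The plan is to follow the classical Calderón–Zygmund scheme adapted to the Ornstein–Uhlenbeck (Gaussian) setting, splitting each operator into a \emph{local} part and a \emph{global} part via a decomposition of the underlying region into the admissible region near the diagonal and its complement. Concretely, write each of the six operators $T\in\{P^{Q,B}_{*,k,j},g^{Q,B}_{k,j},V^{Q,B}_{\rho,k,j},S^{Q,B}_{*,k,j,M},G^{Q,B}_{k,j,M},\mathcal V^{Q,B}_{\rho,k,j,M}\}$ in the form $T=T_{\mathrm{loc}}+T_{\mathrm{glob}}$, where $T_{\mathrm{loc}}$ is obtained by inserting a cutoff supported where $|x-y|\lesssim \min(1,1/|x|)$ (the Mauceri--Noselli admissible geometry, since $B$ has $I$ in the drift) and $T_{\mathrm{glob}}$ carries the complementary cutoff. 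The key structural input I would use is the explicit kernel $\widetilde h^{Q,B}_t(x,y)$ recorded in the excerpt together with the subordination formula for $\{P^{Q,B}_t\}$ and the corresponding integral representation $(I+t\mathcal L^{Q,B})^{-M}=\frac{1}{(M-1)!}\int_0^\infty s^{M-1}e^{-s}\mathcal H^{Q,B}_{ts}\,ds$ for the resolvent powers, which reduce all six operators to size/derivative estimates on $\widetilde h^{Q,B}_t$ and its $t$- and $x_j$-derivatives.

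For the local parts the strategy is comparison with the Euclidean Poisson semigroup / Euclidean heat semigroup: on the admissible region the Gaussian weight $e^{-\lambda|y|^2}$ is comparable to $e^{-\lambda|x|^2}$, and the kernel of $\mathcal H^{Q,B}_t$ behaves like a (skew-rotated) Gaussian kernel of variance $\sim t$ for small $t$ and is exponentially small for large $t$; hence $T_{\mathrm{loc}}$ is controlled by the corresponding classical Littlewood–Paley/maximal/variation operator for the heat or Poisson semigroup on $\mathbb R^d$ with the \emph{local} Gaussian measure treated as an $A_p$-type (indeed locally $A_\infty$) weight. For the maximal and square function pieces this is standard; for the variation pieces I would invoke the Lebesgue-space variational bounds for the classical Poisson semigroup (of the type in \cite{CJRW}, \cite{HMMT}) together with the fact that differences of $t^{k+1}\partial_t^k\partial_{x_j}P_t$ applied to the kernel are controlled in the appropriate way. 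The periodicity hypothesis on $\{e^{tR}\}$ enters here and in the global part to keep the rotation $e^{tB}x=e^{-\lambda t}e^{-\lambda tR}x$ from creating unbounded oscillation: since $R$ generates a periodic group, $|e^{tR}x|=|x|$ and the phase stays in a compact set, so all the estimates are uniform in the rotation.

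For the global parts the plan is to prove a pointwise kernel bound: after integrating out $t$ (using subordination for the Poisson family and the gamma-integral for the resolvent family, and dominating the $\rho$-variation and the sup by an $L^2(dt/t)$ or $L^1(dt/t)$ integral of the $t$-derivative, as is routine once one has the square-function estimate), one is left with an integral operator whose kernel $K(x,y)$ satisfies, on the complement of the admissible region, a Gaussian-type bound of the form $|K(x,y)|\lesssim e^{-c|x-y|^2(1+|x|^2)}\,(\text{polynomial factors})$ or the weaker off-diagonal bound familiar from \cite{MN1},\cite{MN2},\cite{CCS1},\cite{CCS2}; then $L^p(\gamma_{\infty,\lambda})$-boundedness follows by Schur's test against the invariant measure and weak $(1,1)$ follows from the argument of Mauceri–Noselli/Sjögren for the global region (the kernel is, up to harmless factors, dominated by an operator already known to be weak type $(1,1)$ there). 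I expect the main obstacle to be the \emph{global $L^1$-to-$L^{1,\infty}$ bound}: one must produce a sufficiently sharp pointwise estimate for the $t$-derivative $t^{k+1}\partial_t^k\partial_{x_j}\widetilde h^{Q,B}_t(x,y)$, integrate it in $t$ without losing the decay needed for the Sjögren-type forbidden-region covering argument, and handle the extra $x_j$-derivative (which produces a factor that can be as large as $1+|x|+|y|$ in the exponent's gradient) uniformly in the periodic rotation; getting the constants right so that the weak-type kernel bound survives the full range of $|x|$ is the delicate point, and I would organize it by separating the cases $|x|$ bounded, $|x|$ large with $|x-y|$ small-but-non-admissible, and $|x|$ large with $|x-y|$ large, mirroring \cite[Section 3]{MN1}.
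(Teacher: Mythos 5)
There is a genuine gap, and it sits exactly where you predicted the difficulty would be: the global part. Your plan is to integrate out $t$ (subordination for the Poisson family, the gamma integral for the resolvents) and then use a Gaussian-type off-diagonal bound for the resulting kernel on the global region, valid uniformly in the time variable, feeding it into a Schur test and the Mauceri--Noselli/Sj\"ogren weak type argument. In the nonsymmetric case this uniform-in-time bound is simply not available: the decisive estimate, $h^{I,B_1}_{\tau(s)}(x,y)\le C s^{-d/2}e^{|x|^2-|y|^2-\frac{\delta}{4s}Q_s(x,y)}$, holds only for $s\in\tau^{-1}((0,t_0))$ and, thanks to periodicity of $\{e^{tR}\}$, for $s\in\tau^{-1}(J_D^\#)$ with $J_D^\#=\bigcup_{n}(J+nD)$; at the excluded times the rotation $e^{uR}$ can bring $e^{uR}y$ close to $e^{-u}x$ even when $(x,y)$ lies in the global region, destroying the decay. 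The paper's proof therefore does not integrate over all $u\in(0,\infty)$ at once: it covers $(0,\infty)$ (up to a null set) by translates of $(0,t_0)$ and of $J_D^\#$, uses the semigroup property to write $t^{k+1}\partial_t^k\partial_{x_j}P_t^{I,B_1}(f)=\sum_{\ell}S^{(0,t_0),\ell t_0}_{k,j}(\mathcal H^{I,B_1}_{\ell t_0}f)+\sum_{\ell}S^{J_D^\#,\ell\beta}_{k,j}(\mathcal H^{I,B_1}_{\ell\beta}f)$, and invokes the $L^p(\gamma_{\infty,1})$-contractivity of $\mathcal H^{I,B_1}_h$ to reduce everything to operators whose time integral runs only over a good set (Claims 1 and 4). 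This shift-and-cover device is the essential new ingredient and is absent from your proposal. Note also that your stated reason for needing periodicity (that $|e^{tR}x|=|x|$, so the phase stays compact) holds for \emph{every} skew-adjoint $R$, periodic or not, so it cannot explain the hypothesis; periodicity is what makes the good time set $J_D^\#$ periodic and hence able to cover all large times by finitely many translates.

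A second, smaller gap concerns the weak type $(1,1)$ bounds for the local variation operators. Citing the classical variational inequalities of \cite{CJRW} and \cite{HMMT} does not suffice once the spatial derivative $\partial_{x_j}$ is present: the paper first identifies $t^{k+1}\partial_t^k\partial_{x_j}\mathbb{P}_t f$ as $C\,\phi_t\ast R_jf$ for a suitable radial profile $\phi$, and \cite[Lemma 2.4]{CJRW} then yields only the $L^p$ bounds for $1<p<\infty$, since $R_j$ is not bounded on $L^1$; the endpoint is obtained instead through vector-valued Calder\'on--Zygmund theory with values in $V_\rho([1/N,N])$, with constants uniform in $N$ and a monotone convergence passage, together with the kernel estimates \eqref{4.1}--\eqref{4.2}. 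For the resolvent family the analogous identification goes through Hankel transforms and Kummer functions, and this is precisely where the hypothesis $M>(d+1)/2$ is used. These steps (and the transference from $dx$ to $\gamma_{\infty,1}$ via \cite[Lemma 3.6]{GCMST2}, rather than an $A_p$-weight argument) would need to be supplied to make your local analysis complete.
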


As it is usual since~\cite{Mu1} was published, in the study of $L^p$-boundedness properties of harmonic analysis operators in the Ornstein-Uhlenbeck setting, in order to prove Theorem~\ref{Th1.1} we decompose the operator in two ones that are named the local part and the global part of the operator under consideration. The local part, as the original operator, is a singular integral, while the global part is controlled by a positive operator. 

In Section 2 we give the definitions and known results   that will be useful  in the sequel. We also explain the method used in the proof of Theorem \ref{Th1.1}. $L^p$-boundedness properties stated in Theorem \ref{Th1.1} are proved in Sections 3 and 4.

Throughout this paper $C$ and $c$ always represent positive constants that can change in each occurrence.

\section{Preliminaries}

Assume that $Q=I$ and $B=-\lambda(I+R)$, where $\lambda >0$ and $R$ is skew-adjoint. In this case we have that
\begin{align*}
d\gamma_\infty(x):=d\gamma_{\infty,\lambda}(x)=\left(\frac{\pi}{\lambda}\right)^{-d/2}e^{-\lambda|x|^2}dx.
\end{align*}
Actually we are going to work with $\lambda=1$. We define $B_1=-(I+R)$. We have that
\begin{align*}
\mathcal H_t^{Q,B}=\mathcal U_\lambda^{-1}\mathcal H_{\lambda t}^{Q,B_1}\mathcal U_\lambda,\;\;\;\;t>0,
\end{align*}
where $\mathcal U_\lambda(f)(x)=f(x/\sqrt{\lambda})$, $x\in\mathbb R^d$. It is clear that $\mathcal U_\lambda$ is an isometry from $L^p(\mathbb R^d,\gamma_{\infty,\lambda})$ (respectively, $L^{p,\infty}(\mathbb R^d,\gamma_{\infty,\lambda})$) into $L^p(\mathbb R^d,\gamma_{\infty,1})$ (respectively, $L^{p,\infty}(\mathbb R^d,\gamma_{\infty,1})$), for every $1\leq p <\infty$.

After a change of variable we can write
\begin{align}\label{2.1}
P_t^{Q,B}(f)(x) & = \frac{t}{2\sqrt{\pi}}\int_0^\infty\frac{e^{-\frac{t^2}{4u}}}{u^{\frac{3}{2}}}U_\lambda ^{-1}\mathcal H_{\lambda u}^{Q,B_1}(U_\lambda f)(x)du \nonumber\\
& =\mathcal U_\lambda^{-1}\left[\frac{\sqrt{\lambda}t}{2\sqrt{\pi}}\int_0^\infty\frac{e^{-\frac{\lambda t^2}{4v}}}{v^{\frac{3}{2}}}\mathcal H_{v}^{Q,B_1}(U_\lambda(f))(\cdot)dv\right](x) \nonumber \\
& =\mathcal U_\lambda^{-1}\left[P_{\sqrt{\lambda}t}^{Q,B_1}(U_\lambda(f))\right](x),\;\;\;\;x\in\mathbb R^d\;\mbox{and}\;t>0.
\end{align}
By using (\ref{2.1}) we deduce that, by denoting $T^{Q,B}$ every of the operators considered in Theorem \ref{Th1.1}, the following equality holds
\begin{align*}
T^{Q,B}=\mathcal U_\lambda^{-1}T^{Q,B_1}\mathcal U_\lambda.
\end{align*}
Thus, we show that it is sufficient to prove Theorem \ref{Th1.1} when $\lambda =1$. In the sequel we assume $\lambda =1$.

We can write
\begin{align*}
\mathcal H_t^{I,B_1}(f)(x)=\int_{\mathbb R^d}h_t^{I,B_1}(x,y)f(y)dy,\quad x\in\mathbb R^d\mbox{ and }t>0,
\end{align*}
where
\begin{align*}
h_t^{I,B_1}(x,y)=\frac{1}{(2\pi(1-e^{-2t}))^{d/2}}e^{-\frac{|e^{tB_1}x-y|^2}{1-e^{-2t}}},\quad x,y\in\mathbb R^d\mbox{ and }t>0.
\end{align*}

As it was mentioned in the introduction in order to study $L^p$-boundedness properties of the harmonic analysis operators in the Ornstein-Uhlenbeck setting, those operators are decomposed in a local part and a global part.

Let $\delta >0$. We define the sets
\begin{align*}
L_{\color{green} \sigma}=\left\{(x,y)\in\mathbb R^d\times\mathbb R^d:\;|x-y|\leq{\color{green} \sigma}\min\{1,|x+y|^{-1}\}\right\},
\end{align*}
and $G_{\color{green} \sigma}=(\mathbb R^d\times\mathbb R^d)\setminus L_{\color{green} \sigma}$. $L_{\color{green} \sigma}$ and $G_{\color{green} \sigma}$ are named the ${\color{green} \sigma}$-local and ${\color{green} \sigma}$-global region respectively.

The integral kernel $h_t^{I,B_1}$ of $\mathcal H_t^{I,B_1}$, $t>0$, can be estimated in different ways on $L_{\color{green} \sigma}$ and $G_{\color{green} \sigma}$.

\begin{lem}\label{Lem2.1}(\cite[Lemma 3.3]{MN2}).
For every $\sigma >0$, there exists $C>0$ such that
$$
0\leq h_t^{I,B_1}(x,y)\leq \frac{C}{(1-e^{-2t})^{d/2}}e^{-\frac{|x-y|^2}{2(1-e^{-2t})}},\quad (x,y)\in L_\sigma,\;x\neq y\mbox{ and }t>0.
$$
\end{lem}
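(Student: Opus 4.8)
The plan is to estimate the Gaussian exponent by comparing the quadratic form $|e^{tB_1}x-y|^2$ with $|x-y|^2$, exploiting that on the local region $L_\sigma$ the points $x$ and $y$ are close and the "size" of $x+y$ is controlled. Write $B_1=-(I+R)$ with $R$ skew-adjoint, so $e^{tB_1}=e^{-t}e^{-tR}$ where $e^{-tR}$ is orthogonal. Hence
\begin{align*}
e^{tB_1}x-y=e^{-t}e^{-tR}x-y=e^{-t}e^{-tR}(x-y)+(e^{-t}e^{-tR}-I)y.
\end{align*}
The first term has norm $e^{-t}|x-y|$. The task is to show that the cross term and the second term only improve the estimate, up to a constant depending on $\sigma$; equivalently, I want
\begin{align*}
\frac{|e^{tB_1}x-y|^2}{1-e^{-2t}}\geq \frac{|x-y|^2}{2(1-e^{-2t})}-C
\end{align*}
on $L_\sigma$, and the prefactor $(2\pi(1-e^{-2t}))^{-d/2}$ is already of the right form (the constant $2\pi$ versus the claimed implicit constant is absorbed into $C$).

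First I would treat the regime of large $t$, say $t\geq 1$: there $1-e^{-2t}$ is bounded above and below, so $(1-e^{-2t})^{-d/2}\simeq 1\simeq (1-e^{-2t})^{-d/2}e^{-|x-y|^2/(2(1-e^{-2t}))}$ provided $|x-y|$ is bounded, which holds on $L_\sigma$ since there $|x-y|\leq\sigma$; so the bound is trivial (with $C$ depending on $\sigma$). The substantive case is $0<t<1$, where $1-e^{-2t}\simeq t$. Here I would expand
\begin{align*}
|e^{tB_1}x-y|^2=e^{-2t}|x-y|^2+2e^{-t}\langle e^{-tR}(x-y),(e^{-tR}-I)y\rangle\cdot e^{-t}+|(e^{-t}e^{-tR}-I)y|^2,
\end{align*}
wait—more cleanly, using $e^{-tR}$ orthogonal, $|e^{tB_1}x-y|^2=|e^{-t}x-e^{tR}y|^2$ after applying $e^{tR}$, and then one writes $e^{-t}x-e^{tR}y=e^{-t}(x-y)-(e^{tR}-e^{-t}I)y$ and expands. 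The point is that $\|e^{tR}-I\|\leq Ct$ and $|1-e^{-t}|\leq Ct$ for $0<t<1$, so the correction terms are bounded by $Ct\,|x-y|\,|y|+Ct^2|y|^2$. On $L_\sigma$ we have $|x-y|\,|y|\leq \sigma$ (using $|x-y|\leq\sigma|x+y|^{-1}$ and $|y|\leq |x+y|+|x-y|\lesssim |x+y|+1$, together with $|x-y|\leq\sigma$), and similarly $t|y|\lesssim 1$ is NOT automatic—so I must be careful: $t^2|y|^2$ is controlled only after dividing by $t$, giving $t|y|^2\lesssim |x-y||y|\cdot(t|y|/|x-y|)$, which need not be bounded. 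The correct route is to keep the Gaussian: since $e^{-2t}/(1-e^{-2t})=1/(1-e^{-2t})-1$, one gets $e^{-2t}|x-y|^2/(1-e^{-2t})=|x-y|^2/(1-e^{-2t})-|x-y|^2\geq |x-y|^2/(1-e^{-2t})-\sigma^2$; and the cross term divided by $1-e^{-2t}\simeq t$ is $\lesssim |x-y||y|/t\cdot t=\lesssim|x-y||y|\lesssim\sigma$ after noting the extra factor $t$ from $\|e^{tR}-I\|$; the quadratic correction divided by $t$ is $\lesssim t|y|^2$, and this last term is the one requiring the full strength of $|x-y|\leq\sigma|x+y|^{-1}$ to be absorbed into the negative part $-\tfrac{1}{2}|x-y|^2/(1-e^{-2t})$ by a Cauchy–Schwarz/completing-the-square argument.

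The main obstacle, then, is controlling the term $|(e^{tB_1}-e^{-t}I)y|^2/(1-e^{-2t})\simeq t|y|^2$ uniformly on $L_\sigma$: it is genuinely unbounded, and the factor $1/2$ lost in the statement is precisely what buys room to dominate it. I expect the clean argument is to complete the square, writing $|e^{tB_1}x-y|^2\geq (1-\eta)e^{-2t}|x-y|^2-C_\eta\|e^{tB_1}-e^{-t}I\|^2|y|^2$ for any $\eta\in(0,1)$ via $|a+b|^2\geq(1-\eta)|a|^2-\eta^{-1}|b|^2$, then choosing $\eta$ so that $(1-\eta)e^{-2t}\geq 1/2$ for $t$ small, and finally checking $\|e^{tB_1}-e^{-t}I\|^2|y|^2/(1-e^{-2t})\lesssim t|y|^2\leq C$ on $L_\sigma$—which does hold there because on $L_\sigma$ one has $|x-y|\leq\sigma$ and $|x-y|\,|x+y|\leq\sigma$, forcing $|y|^2\lesssim 1+|x+y|^2$ and $t\lesssim 1$, but $t|y|^2$ still is not bounded. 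Hence the honest resolution must use that we only need the bound for the heat kernel, where any term of the form $c\,t|y|^2$ with $t\le 1$ can be absorbed once we also have a matching gain; I would therefore follow the cited reference \cite[Lemma 3.3]{MN2} and split further according to whether $t|y|^2\le 1$ or not, in the latter subcase using that $|e^{tB_1}x-y|\geq |e^{-t}|y|-|e^{tR}... |$ type lower bounds give an exponential in $t|y|^2$ that dominates everything. In short: elementary linear algebra for the exponent, a case split on the size of $t$ (and, for small $t$, on the size of $t|y|^2$), and completing the square to recover the factor $\tfrac12$; the only delicate point is the uniform control of the correction terms on the local region, which is exactly the content of the lemma.
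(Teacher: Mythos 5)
Your overall plan (dispose of $t\geq 1$ trivially on $L_\sigma$ since $|x-y|\leq\sigma$ there, then expand $|e^{tB_1}x-y|^2=|e^{-t}x-e^{tR}y|^2$ for $0<t\leq 1$ and let the local region control the corrections) is the right one, and your treatment of the cross term is correct: $\|e^{-t}I-e^{tR}\|\leq Ct$ for $0<t\leq 1$, and on $L_\sigma$ one has $|x-y|\,|y|\leq \tfrac12\big(|x-y|\,|x+y|+|x-y|^2\big)\leq\tfrac12(\sigma+\sigma^2)$, so after dividing by $1-e^{-2t}\simeq t$ the cross term only costs an additive constant in the exponent. The genuine gap is exactly where you say you are stuck, and it is a difficulty you created yourself: the term $\|e^{tR}-e^{-t}I\|^2|y|^2/(1-e^{-2t})\simeq t|y|^2$ only appears because you replaced the exact expansion by the inequality $|a+b|^2\geq(1-\eta)|a|^2-\eta^{-1}|b|^2$, which discards the sign of $|b|^2$. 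In the exact expansion, which you in fact wrote down,
$$
|e^{-t}x-e^{tR}y|^2=e^{-2t}|x-y|^2+2e^{-t}\big\langle x-y,(e^{-t}I-e^{tR})y\big\rangle+\big|(e^{-t}I-e^{tR})y\big|^2,
$$
the quadratic correction enters with a plus sign, so for a \emph{lower} bound on the exponent it can simply be dropped. This gives, for $(x,y)\in L_\sigma$ and $0<t\leq 1$,
$$
\frac{|e^{-t}x-e^{tR}y|^2}{1-e^{-2t}}\;\geq\;\frac{e^{-2t}|x-y|^2}{1-e^{-2t}}-C\,\frac{t}{1-e^{-2t}}\,|x-y|\,|y|\;\geq\;\frac{|x-y|^2}{1-e^{-2t}}-|x-y|^2-C_\sigma\;\geq\;\frac{|x-y|^2}{1-e^{-2t}}-C_\sigma',
$$
which is even stronger than the stated estimate; the factor $\tfrac12$ is not needed to absorb anything.

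Consequently no case split on the size of $t|y|^2$ and no ``exponential in $t|y|^2$'' lower bound are required, and as written that last step of yours is only an appeal to the reference together with an unproven claim, so your argument is incomplete precisely at the one delicate point (though the repair above is short). Note also that the paper itself gives no proof of this lemma: it is quoted from \cite[Lemma 3.3]{MN2}, whose argument is of the elementary kind just described and never needs the absorption you were trying to arrange.
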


In order to obtain a more manageable form of the kernel $ h_t^{I,B_1}$ in the symmetric case, that is, when $R=0$, the following change of variable due to S. Meda was introduced in \cite{GCMMST1}
$$
\tau(s)=\log\frac{1+s}{1-s},\quad s\in(0,1).
$$
We observe that $\tau$ maps $(0,1)$ onto $(0,\infty)$. 

For every $s\in(0,1)$ we consider the quadratic form $Q_s$ defined by
$$Q_s(x,y)=|(1+s)x-(1-s)y|^2,\quad x,y\in\mathbb R^d.$$
As in \cite[p. 190]{MN1} if $J$ is an interval in $(0,\infty)$ and $D>0$ we define $J_D^\# =\cup_{n\in\mathbb N}(J+nD)$.

After a careful reading of the proof of \cite[Lemmas 5.5 and 5.6]{MN1} we can see that with minor modifications in those ones the following properties can be proved.

\begin{lem}\label{Lem2.2} Let $\delta\in (0,1)$.
\begin{enumerate}
    \item[(i)] There exists $C$ and $t_0>0$ such that
    \begin{align}\label{2.2}
    h_{\tau(s)}^{I,B_1}(x,y)\leq Cs^{-\frac{d}{2}}e^{|x|^2-|y|^2-
    \frac{\delta}{4s}Q_s(x,y)},\quad x,y\in\mathbb R^d\mbox{ and }s\in\tau^{-1}((0,t_0)).
    \end{align}
    \item[(ii)] Suppose that the one-parameter group of rotations $\{e^{tR}\}_{t\in\mathbb R}$ generated by the matrix $R$ is periodic of period $D$. Then, there exists an interval $J=(a,b)$, with $0<a<b<\infty$, and $C>0$ such that
 \begin{align}\label{2.3}
 h_{\tau(s)}^{I,B_1}(x,y)\leq Cs^{-\frac{d}{2}}e^{|x|^2-|y|^2-\frac{\delta}{4s}Q_s(x,y)},\quad x,y\in \mathbb R^d\mbox{ and }s\in\tau^{-1}(J_D^\#).
\end{align}
\end{enumerate}
Here $C$, $t_0$, $a$ and $b$ depend on $\delta$.
\end{lem}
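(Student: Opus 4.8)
The plan is to follow the proofs of Lemmas 5.5 and 5.6 in \cite{MN1}, adapting them from the symmetric case ($R=0$) to the present nonsymmetric situation, and tracking how the rotation group $\{e^{tR}\}_{t\in\mathbb R}$ enters. The starting point is the explicit kernel
\begin{align*}
h_{\tau(s)}^{I,B_1}(x,y)=\frac{1}{(2\pi(1-e^{-2\tau(s)}))^{d/2}}\,e^{-\frac{|e^{\tau(s)B_1}x-y|^2}{1-e^{-2\tau(s)}}},
\end{align*}
combined with the identity $e^{\tau(s)B_1}=e^{-\tau(s)}e^{\tau(s)R}$ (since $B_1=-(I+R)$ and $I$ commutes with $R$), together with Meda's change of variable, under which $1-e^{-2\tau(s)}=\frac{4s}{(1+s)^2}$ and $e^{-\tau(s)}=\frac{1-s}{1+s}$. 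First I would substitute these into the exponent and write $|e^{\tau(s)B_1}x-y|^2=|e^{-\tau(s)}e^{\tau(s)R}x-y|^2$; since $e^{\tau(s)R}$ is an orthogonal matrix, expanding the square and comparing with $Q_s(x,y)=|(1+s)x-(1-s)y|^2$ produces, after multiplying through by $(1+s)^2$, a quantity of the form $Q_s(x,y)$ up to a correction coming from the non-commutativity, i.e. from $e^{\tau(s)R}x$ versus $x$ in the cross term $\langle e^{\tau(s)R}x,y\rangle$. One also keeps track of the factor $e^{|x|^2-|y|^2}$ that appears exactly as in the symmetric case because $|e^{\tau(s)R}x|=|x|$. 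The main point is then to bound the exponent below by $\tfrac{\delta}{4s}Q_s(x,y)+$ (something absorbable), for a suitable $\delta\in(0,1)$ and $s$ in the relevant range.

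For part (i), with $s$ small (equivalently $\tau(s)$ small, i.e. $s\in\tau^{-1}((0,t_0))$), the rotation $e^{\tau(s)R}$ is close to the identity: $\|e^{\tau(s)R}-I\|\le C\tau(s)\le Cs$. So the cross term $\langle(e^{\tau(s)R}-I)x,y\rangle$ differs from the symmetric cross term by $O(s|x|\,|y|)$, which can be absorbed into a fraction of $\tfrac1s Q_s(x,y)$ by shrinking $\delta$ (and, on the complementary region where $Q_s$ is degenerate relative to $|x||y|$, into the ambient Gaussian factors), exactly the way \cite[Lemma 5.5]{MN1} handles the error terms. For part (ii), the key observation is that periodicity of period $D$ means $e^{\tau(s)R}=I$ whenever $\tau(s)\in D\mathbb N$, hence $e^{\tau(s)R}$ is close to $I$ for $\tau(s)$ in a fixed neighbourhood of each point $nD$; choosing the interval $J=(a,b)$ so that $J+nD$ stays inside such neighbourhoods (this is where $0<a<b<\infty$ and the precise length of $J$ matter, with $a,b$ depending on $\delta$) reduces part (ii) to the same estimate as part (i) on $s\in\tau^{-1}(J_D^\#)$. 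One then checks that on $J_D^\#$ the prefactor $(1-e^{-2\tau(s)})^{-d/2}$ is comparable to $s^{-d/2}$, just as for small $s$.

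The main obstacle I anticipate is controlling the non-commutative cross term uniformly and showing the error is genuinely absorbable into $\tfrac{\delta}{4s}Q_s(x,y)$ together with $e^{|x|^2-|y|^2}$ across all of $\mathbb R^d\times\mathbb R^d$: the quadratic form $Q_s$ degenerates along the line $(1+s)x=(1-s)y$, and near that locus one cannot afford to lose a multiple of $\tfrac1sQ_s$, so the error $O(s|x||y|)$ (resp.\ $O(|x||y|)$ times the size of $e^{\tau(s)R}-I$ on $J_D^\#$) must instead be dominated using the $|x|^2-|y|^2$ term and the fact that on the degeneracy locus $|x|$ and $|y|$ are comparable. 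This is exactly the delicate case analysis carried out in \cite[Lemmas 5.5 and 5.6]{MN1}, and the claim is that it goes through with only the cosmetic change of replacing $x$ by $e^{\tau(s)R}x$ in the appropriate places, the smallness of $e^{\tau(s)R}-I$ (on small-$s$ intervals, or on $J_D^\#$ by periodicity) being precisely what makes the argument survive.
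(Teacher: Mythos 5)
Your plan is not an adaptation of what the paper actually relies on, and its central step does not survive scrutiny. First, a point about the source: the paper offers no argument for Lemma~\ref{Lem2.2} beyond the assertion that the proofs of Lemmas 5.5 and 5.6 of \cite{MN1} work ``with minor modifications''; and those lemmas are \emph{not} symmetric-case results waiting to have a rotation inserted --- \cite{MN1} treats precisely the nonsymmetric semigroup with drift $-(I+R)$, so the rotation and the choice of $t_0$ and of the interval $J$ are already the whole content of their case analysis. Your framing (``adapt the symmetric proof; the rotation enters as a small perturbation of the cross term'') therefore replaces the cited argument by a different, perturbative one, and that is where the gap lies.

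Concretely: since $e^{\tau(s)B_1}=e^{-\tau(s)}e^{-\tau(s)R}$, Meda's change of variables gives the exact identity
\begin{equation*}
h_{\tau(s)}^{I,B_1}(x,y)=c\,(1+s)^{d}s^{-\frac d2}\exp\Big(|x|^2-|y|^2-\frac{1}{4s}Q_s\big(e^{-\tau(s)R}x,\,y\big)\Big),
\end{equation*}
so the bound \eqref{2.2} is equivalent to $Q_s(e^{-\tau(s)R}x,y)\ge \delta\,Q_s(x,y)-Cs$ uniformly in $x,y$ and in the admissible $s$. Your proposal is to treat $Q_s(e^{-\tau(s)R}x,y)-Q_s(x,y)$ as an error of size $O(s|x|\,|y|)$ and absorb it into a fraction of $\tfrac1sQ_s(x,y)$ or into the Gaussian factors. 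Test this at $y=\frac{1+s}{1-s}\,e^{-\tau(s)R}x$: there $Q_s(e^{-\tau(s)R}x,y)=0$, the kernel equals $c(1+s)^ds^{-d/2}e^{|x|^2-|y|^2}$ (the factor $e^{|x|^2-|y|^2}$ is exactly saturated, so there is no slack left in it, and the comparability of $|x|$ and $|y|$ is of no help), while $Q_s(x,y)=(1+s)^2|(I-e^{-\tau(s)R})x|^2$, which for any fixed admissible $s$ with $e^{-\tau(s)R}\neq I$ grows like $|x|^2$. Hence the quantity you must absorb is of order $\delta\,|(I-e^{-\tau(s)R})x|^2/s$ in the exponent, unbounded in $x$, and neither shrinking $\delta$, nor shrinking $t_0$ in (i), nor placing $J+nD$ close to multiples of the period in (ii) removes it, because closeness of $e^{-\tau(s)R}$ to $I$ is not exactness. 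So the claim in your last paragraph that the argument ``goes through with only the cosmetic change of replacing $x$ by $e^{\tau(s)R}x$'' is exactly what fails: a pointwise bound featuring the \emph{unrotated} $Q_s(x,y)$ together with the full weight $e^{|x|^2-|y|^2}$ cannot be reached by this absorption scheme, and any correct proof must keep the rotated argument inside the relevant quadratic expressions (or restrict, integrate or take suprema in the time variable in the way the estimates are actually used), which is a substantive, not cosmetic, modification that your proposal does not supply.
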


Note that (\ref{2.2}) and (\ref{2.3}) also hold when $t_0$ is replaced by $t_1\in (0,t_0)$ and $J$ is replaced by an interval $J_1\subset J$, respectively. In the sequel we consider $t_0>0$ and an interval $J=(a,b)$, with $0<a<b<\infty$, satisfying (\ref{2.2}) and (\ref{2.3}), respectively, and such that there exist $n,m\in\mathbb N$ and $\beta >0$ for which
\begin{equation}\label{covering}
(0,\infty)\setminus \mathcal{N}=\Big[\bigcup_{k=0}^n((0,t_0)+kt_0)\Big]\cup\Big[\bigcup_{\ell=0}^m(J_D^\# +\ell\beta)\Big],
\end{equation}
 for certain $\mathcal{N}\subset (0,\infty )$ of measure zero, and being a disjoint union.
Let $\sigma >0$. We choose an smooth function $\varphi$ in $\mathbb R^d\times\mathbb R^d$ satisfying that
\begin{enumerate}
    \item[(i)] $0\leq \varphi\leq 1$, $x,y\in\mathbb R^d$;
    \item[(ii)] $\varphi(x,y)=1$, $(x,y)\in L_\sigma$, and $\varphi(x,y)=0$, $(x,y)\notin L_{2\sigma}$;
    \item[(iii)] $|\nabla_x\varphi(x,y)|+|\nabla_y\varphi(x,y)|\leq\frac{C}{|x-y|}$, $x,y\in\mathbb R^d$, $x\neq y$.
\end{enumerate}

By $L^0(\mathbb R^d)$ we denote the space of Lebesgue measurable functions in $\mathbb R^d$ and we represent by $C_c^\infty(\mathbb R^d)$ the space of smooth functions in $\mathbb R^d$ having compact support. Suppose that $T$ is a linear or sublinear operator from $C_c^\infty(\mathbb R^d)$ into $L^0(\mathbb R^d)$. We define the local part $T_{\rm loc}$ of $T$ by
$$
T_{\rm loc}(f)(x)=T(f(\cdot)\varphi(x,\cdot))(x),\quad x\in\mathbb R^d,
$$
and the global part $T_{\rm glob}$ of $T$ by
$$
T_{\rm glob}(f)(x)=T(f)(x)-T_{\rm loc}(f)(x),\quad x\in\mathbb R^d.
$$
The following results were proved in \cite{GCMMST2} and they will be useful to prove that the global parts of the operators in Theorem \ref{Th1.1} are bounded from $L^1(\mathbb R^d,\gamma_{\infty,1})$ into $L^{1,\infty}(\mathbb R^d,\gamma_{\infty,1})$. If $x,y\in\mathbb R^d\setminus\{0\}$, $\theta(x,y)$ denotes the angle between $x$ and $y$.

\begin{lem}\label{Lem2.3}(\cite[Lemma 4.1]{GCMMST2})
 For every $\delta >0$ there exists $C>0$ such that
$$\sup_{0<s<1}s^{-d/2}e^{ (-\frac{\delta}{s})Q_s(x,y)}\leq C\min\{(1+|x|)^d, (|x|\sin\theta(x,y))^{-d}\},\;\;\;\;(x,y)\in G_1,\;x\neq 0\neq y.$$
\end{lem}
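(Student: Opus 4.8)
\textbf{Proof plan for Lemma~\ref{Lem2.3}.}
The plan is to estimate the supremum in two regimes according to the size of $s$, and to separate the factor $s^{-d/2}$ from the quadratic form $Q_s(x,y)=|(1+s)x-(1-s)y|^2$. The key elementary inequality is that, since $e^{-r}r^{d/2}\leq C_d$ for all $r\geq 0$, one has $s^{-d/2}e^{-\frac{\delta}{2s}Q_s(x,y)}\leq C\,(Q_s(x,y))^{-d/2}$ uniformly in $s\in(0,1)$; so it suffices to keep a fraction (say $\delta/2$) of the exponent to produce the power decay $(Q_s(x,y))^{-d/2}$ and then to bound the remaining factor $e^{-\frac{\delta}{2s}Q_s(x,y)}$ together with $Q_s(x,y)^{-d/2}$ appropriately. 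The two bounds asserted, $(1+|x|)^d$ and $(|x|\sin\theta(x,y))^{-d}$, will come from two different lower bounds for $Q_s(x,y)$.

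First I would establish the bound $C(1+|x|)^d$, which is the relevant one when $s$ is not too small. Write $(1+s)x-(1-s)y=(x-y)+s(x+y)$. On $G_1$ we have $|x-y|>\min\{1,|x+y|^{-1}\}$, hence either $|x-y|>1$ or $|x-y|\,|x+y|>1$. A short case analysis gives a lower bound of the form $Q_s(x,y)\geq c\,s^2(1+|x|)^{-2}$ when, say, $|x+y|\geq 1$, using $|x|\le |x-y|+|x+y|$ crudely; combining with $Q_s(x,y)\geq |x-y|^2$ handles the complementary case. In either case $s^{-d/2}e^{-\frac{\delta}{s}Q_s(x,y)}\leq s^{-d/2}e^{-\frac{c\delta s}{(1+|x|)^2}}$, and optimizing the right-hand side over $s\in(0,1)$ (the worst $s$ is of order $(1+|x|)^2$, truncated at $1$) yields the bound $C(1+|x|)^{d}$. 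Here I would be careful to treat separately the range $s\le (1+|x|)^{-2}$, where one simply bounds $s^{-d/2}\le (1+|x|)^{d}$ and the exponential by $1$.

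Next I would prove the bound $C(|x|\sin\theta(x,y))^{-d}$, which dominates for small $s$. Decompose $y=y_\parallel+y_\perp$ with $y_\parallel$ parallel to $x$ and $y_\perp\perp x$; then $|y_\perp|=|y|\sin\theta(x,y)$, and the component of $(1+s)x-(1-s)y$ orthogonal to $x$ is exactly $-(1-s)y_\perp$, so $Q_s(x,y)\geq (1-s)^2|y_\perp|^2\geq \tfrac14|y|^2\sin^2\theta(x,y)$ for $s\in(0,\tfrac12)$. Using $s^{-d/2}e^{-\frac{\delta}{2s}Q_s(x,y)}\le C Q_s(x,y)^{-d/2}$ and then $e^{-\frac{\delta}{2s}Q_s}\le 1$ gives $s^{-d/2}e^{-\frac{\delta}{s}Q_s(x,y)}\le C(|y|\sin\theta(x,y))^{-d}$ on $s\in(0,\tfrac12)$; for $s\in[\tfrac12,1)$ the factor $s^{-d/2}$ is bounded and one argues as before. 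Finally I would convert $|y|\sin\theta(x,y)$ into $|x|\sin\theta(x,y)$: the distance from $y$ to the line through $0$ and $x$ equals $|y|\sin\theta$, and also equals the distance from $x$ to the line through $0$ and $y$ only up to the ratio $|x|/|y|$; instead, simply note $|x-y|\ge |y|\sin\theta$ and $|x-y|\ge\big||x|-|y|\big|$, so if $|y|\le 2|x|$ then $|y|\sin\theta\ge \tfrac12|x|\sin\theta$ trivially after adjusting, while if $|y|>2|x|$ then $|x-y|\ge|y|/2>|x|\ge|x|\sin\theta$, which already gives the desired bound via $Q_s(x,y)\ge|x-y|^2$ at $s$ bounded below, together with the small-$s$ argument. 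Taking the minimum of the two bounds completes the proof.

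\emph{Main obstacle.} The delicate point is the passage between $|y|\sin\theta(x,y)$ and $|x|\sin\theta(x,y)$, and more generally making the case distinctions on $G_1$ (where only $|x-y|>\min\{1,|x+y|^{-1}\}$ is known) interact cleanly with the two competing lower bounds for $Q_s(x,y)$; one has to check that in every corner of $G_1$ at least one of the two bounds is in force with the right constant, uniformly in $s$. I expect this bookkeeping, rather than any single estimate, to be the crux; the exponential-versus-power trick $s^{-d/2}e^{-c Q_s/s}\lesssim Q_s^{-d/2}$ is what makes each individual estimate short.
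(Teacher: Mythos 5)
The paper does not prove Lemma~\ref{Lem2.3}; it is quoted as \cite[Lemma 4.1]{GCMMST2}, so there is no in-text proof to compare with. Evaluating your plan on its own terms: the high-level strategy (split the exponent, use the power--exponential inequality $s^{-d/2}e^{-cQ_s/s}\lesssim Q_s^{-d/2}$, and feed in two different lower bounds for $Q_s$) is the right shape, but several of the concrete inequalities you invoke are false or reversed, and they are exactly the places where the work of the lemma is supposed to happen.

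First, the lower bound $Q_s(x,y)\geq c\,s^2(1+|x|)^{-2}$ on $G_1$ (even restricted to $|x+y|\geq 1$) is false. Take $y=tx$ with $t>1$: then $Q_{s_0}(x,y)=0$ at $s_0=\frac{t-1}{t+1}$, and on $G_1$ one checks $s_0\gtrsim(1+|x|)^{-2}$, so $Q_{s_0}=0$ contradicts $Q_{s_0}\geq c\,s_0^2(1+|x|)^{-2}>0$. Thus the optimization you run on $s^{-d/2}e^{-c\delta s/(1+|x|)^2}$ rests on an inequality that does not hold. Second, the companion split is reversed: for $s\leq(1+|x|)^{-2}$ you have $s^{-d/2}\geq(1+|x|)^d$, not $\leq$. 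The trivial bound $s^{-d/2}\leq(1+|x|)^d$ together with $e^{-\cdot}\leq 1$ works on the range $s\geq(1+|x|)^{-2}$; it is the complementary range $s<(1+|x|)^{-2}$ where you must extract decay from the exponential, and there a correct on-$G_1$ lower bound such as $Q_s\gtrsim(1+|x|)^{-2}$ (valid for $s$ small) is what is needed, not $Q_s\gtrsim s^2(1+|x|)^{-2}$. Third, the passage $|y|\sin\theta\leftrightarrow|x|\sin\theta$ has the hypotheses switched: to deduce $(|y|\sin\theta)^{-d}\lesssim(|x|\sin\theta)^{-d}$ you need $|y|\gtrsim|x|$; the case you label easy, $|y|\leq 2|x|$, does not give this (take $|y|\ll|x|$), and your alternate argument is set up only for $|y|>2|x|$, so the genuinely delicate regime $|y|\ll|x|$ is not covered. (It can be handled: there $|x-y|>|x|/2$ and $\langle x-y,x+y\rangle=|x|^2-|y|^2>0$, so $Q_s\geq|x-y|^2\gtrsim|x|^2$ for all $s$, and the power--exponential trick gives $\lesssim|x|^{-d}\leq(|x|\sin\theta)^{-d}$; but that is a different argument than the one you describe.) In short: the skeleton is sound, but each of the three lemmas-within-the-lemma you lean on needs to be replaced by a correct version, and the case analysis on $G_1$ redone with the inequalities pointing the right way.
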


\begin{lem}\label{Lem2.4}(\cite[Lemma 4.4]{GCMMST2}). The operator $T$ defined by 
$$
T(f)(x)=e^{|x|^2}\int_{\mathbb R^d} \min\{(1+|x|)^d, (|x|\sin\theta(x,y))^{-d}\}f(y)e^{-|y|^2}dy,\quad x\in\mathbb R^d,
$$
is bounded from $L^1(\mathbb R^d,\gamma_{\infty,1})$ into $L^{1,\infty}(\mathbb R^d,\gamma_{\infty,1})$.
\end{lem}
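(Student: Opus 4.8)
The plan is to follow the standard route for Ornstein--Uhlenbeck kernels. Since the kernel of $T$ is nonnegative we may assume $f\ge 0$; set $A:=\int_{\mathbb R^d}f(y)e^{-|y|^2}\,dy$, so that $A\simeq\|f\|_{L^1(\gamma_{\infty,1})}$, fix $\alpha>0$, and write $E_\alpha=\{T(f)>\alpha\}$. On $\{|x|\le R_0\}$, with $R_0$ a large fixed constant, both $e^{|x|^2}$ and $(1+|x|)^d$ are bounded, so $T(f)(x)\le CA$ there; hence $E_\alpha\cap\{|x|\le R_0\}$ is empty if $\alpha>CA$ and has $\gamma_{\infty,1}$--measure $\le1\le CA/\alpha$ otherwise, so this part is controlled by $A/\alpha$. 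It remains to bound $\gamma_{\infty,1}(E_\alpha\cap\{|x|>R_0\})$.

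For $|x|>R_0$ I would pass to polar coordinates $x=r\omega$, $y=\rho\eta$, with $\omega,\eta\in S^{d-1}$. The kernel depends on $y$ only through $\eta$, via $\theta(\omega,\eta)$, so the radial integration in $y$ can be carried out: with $F(\eta):=\int_0^\infty f(\rho\eta)\rho^{d-1}e^{-\rho^2}\,d\rho$ and $m_r(\theta):=\min\{(1+r)^d,(r\sin\theta)^{-d}\}$ one gets
\[
T(f)(r\omega)=e^{r^2}\int_{S^{d-1}}m_r\big(\theta(\omega,\eta)\big)F(\eta)\,d\sigma(\eta),\qquad \int_{S^{d-1}}F\,d\sigma=A,
\]
where $\sigma$ is the surface measure on $S^{d-1}$ and $d\gamma_{\infty,1}(x)=c_d\,r^{d-1}e^{-r^2}\,dr\,d\sigma(\omega)$. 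The key structural observation is that $m_r(\theta(\omega,\cdot))$ is, up to the factor $r^{2-d}$, an approximate identity on $S^{d-1}$ at scale $r^{-2}$ centred at the pair of antipodal points $\pm\omega$: it is $\simeq r^d$ on the geodesic caps of radius $\simeq r^{-2}$ about $\pm\omega$, decays like $(r\sin\theta)^{-d}$, i.e.\ like $(\mathrm{dist})^{-d}$ (integrable against caps) away from these, and satisfies $\int_{S^{d-1}}m_r(\theta(\omega,\cdot))\,d\sigma\simeq r^{2-d}$. Decomposing $m_r$ into the central caps, the dyadic annuli $\{\theta\simeq 2^kr^{-2}\}$ about $\pm\omega$, and the ``bulk'' $\{\sin\theta\simeq1\}$ gives
\[
T(f)(r\omega)\lesssim e^{r^2}r^{-d}A+e^{r^2}r^{2-d}\sum_{k\ge0}2^{-k}\big(\mathcal A_{2^kr^{-2}}F(\omega)+\mathcal A_{2^kr^{-2}}F(-\omega)\big),
\]
where $\mathcal A_\delta F(\omega):=\sigma(C_\omega(\delta))^{-1}\int_{C_\omega(\delta)}F\,d\sigma$ is the mean of $F$ over the geodesic cap $C_\omega(\delta)$.

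I would then estimate $\gamma_{\infty,1}(E_\alpha\cap\{|x|>R_0\})$ term by term. The bulk term is easy: $\{r:\ e^{r^2}r^{-d}A>c\alpha\}$ is a half-line $(r_1,\infty)$ with $e^{-r_1^2}=r_1^{-d}A/(c\alpha)$, so the sharp Gaussian tail $\int_\rho^\infty r^{d-1}e^{-r^2}\,dr\simeq\rho^{d-2}e^{-\rho^2}$ yields $\gamma_{\infty,1}(\{r>r_1\})\simeq r_1^{-2}A/\alpha\lesssim A/\alpha$. For the annular terms one splits $\alpha$ among the indices $k$ with weights summing to $1$ and tries to show that the $k$-th term contributes $\lesssim 2^{-\delta k}A/\alpha$ to $\gamma_{\infty,1}(E_\alpha)$ for some $\delta>0$; the tools available at fixed $r$ are the scale-uniform weak type $(1,1)$ bound $\sigma(\{\mathcal A_\delta F>t\})\lesssim A/t$ (a Vitali covering of $S^{d-1}$ by $\delta$-caps), the trivial bound $\mathcal A_\delta F\le A/\sigma(C(\delta))$, and the Gaussian tail in the $r$-integration.

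The hard part will be that this last step cannot be carried out slice by slice: dominating $T(f)(r\omega)$ pointwise by $e^{r^2}r^{2-d}$ times the spherical Hardy--Littlewood maximal function of $F$, and then integrating in $r$ and $\omega$ separately, loses a factor $\log(\alpha/A)$, because the spherical maximal operator is only of weak type $(1,1)$, not bounded on $L^1$. The radial and angular estimates have to be kept coupled: the averaging radius $\simeq 2^k|x|^{-2}$ shrinks as $|x|\to\infty$, so the radii that matter are essentially confined near $|x|\simeq(\log(\alpha/A))^{1/2}$, where $\gamma_{\infty,1}$ already carries mass $\simeq A/\alpha$; equivalently one should run a single covering argument directly in the Gaussian geometry of $\mathbb R^d$, using the admissible balls $B(x_0,c|x_0|^{-1})$ — on which $\gamma_{\infty,1}(B)\simeq|x_0|^{-d}e^{-|x_0|^2}$ and $\gamma_{\infty,1}$ is doubling — so that $T(f)>\alpha$ on such a ball forces a fixed fraction of the mass of $F$ into a cap of comparable ``Gaussian size'', and a bounded-overlap subfamily has total $\gamma_{\infty,1}$-mass $\lesssim A/\alpha$. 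Once this coupled estimate is in place, summing the geometric series in $k$ and adding the bulk contribution gives $\gamma_{\infty,1}(E_\alpha\cap\{|x|>R_0\})\lesssim A/\alpha\simeq\|f\|_{L^1(\gamma_{\infty,1})}/\alpha$, which is the claim; the full execution of this coupling is the content of \cite[Lemma 4.4]{GCMMST2}.
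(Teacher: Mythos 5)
The paper you are reading does not prove this lemma; it simply cites \cite[Lemma 4.4]{GCMMST2}, so there is no in-paper proof to compare against. Judged as a self-contained argument, your proposal is an accurate and insightful \emph{set-up} of the proof, but it has a genuine gap at exactly the point you flag yourself.

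What you get right: the restriction $|x|\le R_0$ is indeed trivial; the kernel depends on $y$ only through the angle, so the radial integration in $y$ collapses to the spherical density $F$; the kernel mass computation $\int_{S^{d-1}}m_r(\theta(\omega,\cdot))\,d\sigma\simeq r^{2-d}$ is correct (with the balance coming at angular scale $r^{-2}$); the dyadic decomposition into caps of radius $2^k r^{-2}$ about $\pm\omega$, the domination by spherical averages $\mathcal A_{2^kr^{-2}}F(\pm\omega)$ with weight $2^{-k}$, and the estimate $\gamma_{\infty,1}(\{e^{r^2}r^{-d}A>c\alpha\})\lesssim A/\alpha$ for the bulk term all check out. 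You also correctly identify the central obstacle: dominating by the spherical Hardy--Littlewood maximal function of $F$ and then integrating in $r$ and $\omega$ independently inserts $\int_{S^{d-1}}\min\{MF,c\alpha\}\,d\sigma$ in place of $A$, and the weak $(1,1)$ bound for $MF$ only gives $A\bigl(1+\log(\alpha/A)\bigr)$ for that integral, hence a logarithmic loss. This diagnosis is correct and is precisely what makes the lemma nontrivial.

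Where it stops short: the resolution is only gestured at. You write that ``the radial and angular estimates have to be kept coupled'' and that ``one should run a single covering argument directly in the Gaussian geometry of $\mathbb R^d$, using the admissible balls $B(x_0,c|x_0|^{-1})$,'' and then conclude by stating that ``the full execution of this coupling is the content of \cite[Lemma 4.4]{GCMMST2}.'' That coupling \emph{is} the proof; everything before it is reduction. As written you have not shown, for instance, why the level set $\{T(f)>\alpha\}$ can be covered by a bounded-overlap family of admissible balls each of which ``captures'' a fixed fraction of the mass $A$ -- the aperture of the cap that matters varies with $|x_0|$ as $|x_0|^{-2}$, while the admissible ball has radius $|x_0|^{-1}$, and reconciling these two scales (together with the extra dyadic parameter $k$) is exactly where a Vitali-type selection must be run and justified. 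Without carrying this out, the argument is a plan rather than a proof; the log loss you correctly identified in the decoupled route has not actually been removed. To complete the proposal you would need to execute the selection/overlap argument (or, equivalently, reproduce the argument in \cite[Proof of Lemma 4.4]{GCMMST2}), not merely name it.
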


In the study of the local parts of the operators in Theorem \ref{Th1.1} we will use the $L^p$-boundedness properties of the operator $S_\sigma$ defined by
$$
S_\sigma(f)(x)=\int_{\{y\in\mathbb R^d:(x,y)\in L_\sigma\}}\frac{1+|x|}{|x-y|^{d-1}}f(y)dy,\quad x\in\mathbb R^d,
$$
where $\delta >0$. Operators of this type appear also when the symmetric case is considered  (see, for instance, \cite{HMMT}).

\begin{lem}\label{Lem2.5}
Let $\sigma>0$. The operator $S_\sigma$ is bounded from $L^p(\mathbb R^d,dx)$ into itself and also  from $L^p(\mathbb R^d,\gamma_{\infty,1})$ into itself, for every $1\leq p\leq\infty$.
\end{lem}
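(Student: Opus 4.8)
The plan is to prove the two boundedness statements separately, reducing the weighted estimates to unweighted ones on the local region where the Gaussian weight is comparable to a constant.

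First I would establish the $L^p(\mathbb R^d,dx)$ boundedness for $1<p<\infty$. The key observation is that on the local region $L_\sigma$ one has $|x-y|\le \sigma\min\{1,|x+y|^{-1}\}\le\sigma$, so $y$ ranges over a ball of radius $\sigma$ centered at $x$; moreover on this set $1+|x|\le 1+|y|+\sigma\le C(1+|y|)$ and similarly $1+|y|\le C(1+|x|)$. Hence $S_\sigma(f)(x)\le C\int_{|x-y|\le\sigma}\frac{1+|x|}{|x-y|^{d-1}}|f(y)|\,dy$. I would split this according to the size of $|x|$. On the set where $|x|\le 1$ (so $|y|\le 1+\sigma$), the kernel $\frac{1+|x|}{|x-y|^{d-1}}\le\frac{2}{|x-y|^{d-1}}$ is locally integrable in $y$ uniformly in $x$ and vice versa, and since it is supported where $|x-y|\le\sigma$ it defines a bounded convolution-type operator on $L^p$ by Young's inequality (the function $|z|^{-(d-1)}\mathbf 1_{|z|\le\sigma}\in L^1(\mathbb R^d)$). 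On the set where $|x|\ge 1$, on $L_\sigma$ we have the sharper bound $|x-y|\le\sigma/|x+y|$; since $|x+y|\ge|x|-|y|\ge|x|-(1+\sigma)$ but more usefully, using $|x|\ge1$ and $|y|$ comparable to $|x|$, one gets $|x+y|\ge c|x|$ when $x,y$ point in roughly the same direction (which they must, being so close relative to their length), hence $1+|x|\le C|x|\le C\sigma/|x-y|\cdot$(something)$\,$—more precisely $|x-y|\,(1+|x|)\le C$, so the kernel is bounded by $C|x-y|^{-d}\mathbf 1_{|x-y|\le C/(1+|x|)}$ after absorbing; again this is a convolution kernel bounded in $L^1$ (its integral is $\int_{|z|\le C/(1+|x|)}|z|^{-d}$... which is NOT finite). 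So instead I would keep the factor: writing $N=1+|x|$, the kernel is $\le C N/|x-y|^{d-1}$ on $|x-y|\le\sigma/N$, and $\int_{|z|\le\sigma/N}N|z|^{-(d-1)}dz= C\,N(\sigma/N)^1=C\sigma$, uniformly in $N$; the same integral in $x$ for fixed $y$ gives a uniform bound by the comparability of $1+|x|$ and $1+|y|$. By Schur's test with weight $1$, $S_\sigma$ is bounded on $L^p(\mathbb R^d,dx)$ for all $1\le p\le\infty$.

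Next I would transfer to the Gaussian measure. On $L_{2\sigma}$ (hence on the support of the relevant kernels after localization) the weights satisfy $e^{-|x|^2}\asymp e^{-|y|^2}$: indeed $\big||x|^2-|y|^2\big|=|x-y|\,|x+y|\le 2\sigma\min\{1,|x+y|^{-1}\}\,|x+y|\le 2\sigma$. Therefore for $f\ge0$,
\begin{align*}
\int_{\mathbb R^d}\big(S_\sigma f(x)\big)^p e^{-|x|^2}dx
&=\int_{\mathbb R^d}\Big(\int_{\{y:(x,y)\in L_\sigma\}}\frac{1+|x|}{|x-y|^{d-1}}f(y)\,dy\Big)^p e^{-|x|^2}dx\\
&\le C\int_{\mathbb R^d}\Big(\int_{\{y:(x,y)\in L_\sigma\}}\frac{1+|x|}{|x-y|^{d-1}}\big(f(y)e^{-|y|^2/p}\big)\,dy\Big)^p dx,
\end{align*}
and applying the already-proven $L^p(dx)$ bound to the function $g(y)=f(y)e^{-|y|^2/p}$ yields $\le C\int g(y)^p dy=C\int f(y)^p e^{-|y|^2}dy$. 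For $p=1$ and $p=\infty$ one argues directly with Fubini (resp. the pointwise bound) together with the same weight comparability. This gives the $L^p(\mathbb R^d,\gamma_{\infty,1})$ boundedness for every $1\le p\le\infty$.

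The main obstacle is the bookkeeping in the unweighted estimate: one must exploit the \emph{two} competing descriptions of the local region — the plain bound $|x-y|\le\sigma$ when $|x|$ is bounded, and the refined bound $|x-y|\le\sigma/|x+y|$ (forcing $|x-y|(1+|x|)\le C$) when $|x|$ is large — and check in the latter case that $x$ and $y$ genuinely point in nearly the same direction so that $|x+y|\asymp|x|\asymp|y|$, which is what makes the Schur integrals $\int N|x-y|^{-(d-1)}\mathbf 1_{|x-y|\le\sigma/N}$ uniformly bounded. Once that geometric fact is in hand the rest is a routine Schur-test argument, and the passage to the Gaussian weight is immediate from $\big||x|^2-|y|^2\big|\le 2\sigma$ on $L_{\sigma}$.
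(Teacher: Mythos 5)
Your argument is correct and follows essentially the same route as the paper: a Schur test (row and column sups finite, hence $L^1$, $L^\infty$ and, by interpolation, all $L^p(\mathbb R^d,dx)$) using the crude bound $|x-y|\le\sigma$ for bounded $|x|$ and the refined bound $|x-y|\le\sigma/|x+y|$ with $|x+y|\gtrsim 1+|x|$ for large $|x|$; for the Gaussian measure the paper simply cites \cite[Lemma 3.6]{GCMST2}, whose content is exactly the weight comparability $\big||x|^2-|y|^2\big|=|x-y|\,|x+y|\le\sigma$ on $L_\sigma$ that you prove and use directly. One small repair: your split at $|x|\ge 1$ does not quite give $|x+y|\gtrsim |x|$ when $\sigma>1$ (e.g.\ $y$ near $-x$ with $2|x|\le\sigma$ lies in $L_\sigma$ and has $|x+y|$ small), so the threshold should be $|x|\ge\sigma$ (as in the paper, via $|x+y|\ge 2|x|-|x-y|\ge |x|$ when $|x-y|\le\sigma\le|x|$), the region $|x|\le\sigma$ being handled by the crude bound exactly as in your first case.
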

\begin{proof}
We include a sketch of the proof of this property for the sake of completeness.

We have that $|x+y|=|2x+y-x|\geq 2|x|-|x-y|\geq 2|x|-\sigma\geq |x|$, provided that $|x-y|\leq\sigma \leq |x|$. It follows that
$$
\int_{\{y\in\mathbb R^d:(x,y)\in L_\sigma\}}\frac{1+|x|}{|x-y|^{d-1}}dy\leq C(1+|x|)\times \left\{\begin{array}{ll} \displaystyle \int_0^{\sigma/|x|}dr, & |x|\geq\delta \\
 & \\
 \displaystyle \int_0^\sigma dr, & |x|\leq\delta
  \end{array}\right.\leq C.
$$
Hence, $\displaystyle\sup_{x\in\mathbb R^d}\int_{\{y\in\mathbb R^d:(x,y)\in L_\sigma\}}\frac{1+|x|}{|x-y|^{d-1}}dy<\infty$ and in a similar way  
$
\displaystyle\sup_{y\in\mathbb R^d}\int_{\{x\in\mathbb R^d:(x,y)\in L_\sigma\}}\frac{1+|x|}{|x-y|^{d-1}}dx<\infty.
$

By using interpolation we deduce that the operator $S_\sigma$ is bounded from $L^p(\mathbb R^d,dx)$ into itself, for every $1\leq p\leq\infty$. According to \cite[Lemma 3.6]{GCMST2} $S_\sigma$ is bounded from $L^p(\mathbb R^d,\gamma_{\infty,1})$ into itself, for every $1\leq p\leq\infty$.
\end{proof}

We are going to explain the method we use to prove Theorem \ref{Th1.1}. We extend the procedure developed by Mauceri and Noselli (\cite{MN2} and \cite{MN1}).

Suppose that $X$ is a Banach space of complex functions defined in $(0,\infty)$. Let $k\in\mathbb N$ and $j=1,...,d$. We consider the operator $T_{k,j}^X$ defined by
$$
T_{k,j}^X(f)(x)=\big\|t\rightarrow t^{k+1}\partial_t^k\partial_{x_j}P_t^{I,B_1}(f)(x)\big\|_X,\quad x\in\mathbb R^d.
$$
It is clear that $T_{k,j}^X$ reduces to $P_{*,k,j}^{I,B_1}$ and $g_{k,j}^{I,B_1}$ when $X=L^\infty((0,\infty),dt)$ and $X=L^2((0,\infty),\frac{dt}{t})$, respectively. Furthermore, let $\rho >2$. We consider on the space $C(0,\infty)$ of continuous functions on $(0,\infty)$ and the seminorm $V_\rho$ defined in (\ref{1.2}). By identifying those functions in $C(0,\infty)$ that differ in a constant the space $V_\rho(0,\infty)$ consisting of all those $g\in C(0,\infty)$ such that $V_\rho(g)<\infty$ endowed with $V_\rho$ is a Banach space. We have that
$T_{k,j}^{V_\rho(0,\infty)}=V_{\rho,k,j}^{I,B_1}$. 

Let $f\in C_c^\infty(\mathbb{R}^d)$. We can write
\begin{align}\label{2.5}
   \partial_t^k\partial_{x_j}P_t^{I,B_1}(f)(x)=\int_{\mathbb R^d}f(y)  \partial_t^k\partial_{x_j}P_t^{I,B_1}(x,y)dy,\quad x\in\mathbb R^d\mbox{ and }t>0,
\end{align}
where $P_t^{I,B_1}(x,y)$, $x,y\in\mathbb R^d$ and $t>0$, denotes the Poisson integral kernel and 
\begin{align*}
   \partial_t^k\partial_{x_j}P_t^{I,B_1}(x,y)=\frac{1}{2\sqrt{\pi}}\int_0^\infty\partial_t^k[te^{-t^2/4u}]\partial_{x_j}h_u^{I,B_1}(x,y)\frac{du}{u^{\frac{3}{2}}},\quad x,y\in\mathbb R^d\mbox{ and }t>0.
\end{align*}
Differentiations under the integral sign are justified. Indeed, we have 
\begin{align*}
   |\partial_{x_j}h_u^{I,B_1}(x,y)|\leq C\frac{|e^{uB_1^*}||e^{uB_1}x-y|}{(1-e^{-2u})^{d/2+1}}e^{-|e^{uB_:1}x-y|^2/(1-e^{-2u})},\quad x,y\in\mathbb R^d\mbox{ and }u>0.
\end{align*}
Since $|e^{uB^*}|\leq e^{-u}$, $u>0$, we get
$$
|\partial _{x_j}h_u^{I,B_1}(x,y)|\leq C\frac{e^{-u}}{(1-e^{-2u})^{(d+1)/2}},\quad x,y\in \mathbb{R}^d\mbox{ and }t>0.
$$

By using \cite[Lemma 4]{BCCFR} we obtain, for each $x,y\in \mathbb{R}^d$ and $t>0$,
\begin{align*}
    \int_0^\infty |\partial_t^k[te^{-\frac{t^2}{4u}}]||\partial_{x_j}h_u^{I,B_1}(x,y)|\frac{du}{u^{\frac{3}{2}}}&\leq C\int_0^\infty \frac{e^{-\frac{t^2}{8u}-u}}{u^{(k+2)/2}(1-e^{-2u})^{(d+1)/2}}du\\
    &\hspace{-1cm}\leq C\left(\int_1^\infty e^{-u}du+\int_0^1\frac{e^{-\frac{t^2}{8u}}}{u^{(k+d+3)/2}}du\right)\leq C(1+t^{-k-d-1}).
\end{align*}
Since $f\in C_c^\infty (\mathbb{R}^d)$ \eqref{2.5} holds.

Suppose that $E\subset(0,\infty)$ and $h\geq 0$. We define the operator
$$
S_{k,j}^{E,h}(f)(x,t)=\int_{\mathbb{R}^d}\mathfrak{s}_{k,j}^{E,h}(x,y,t)f(y)dy,\quad x\in \mathbb{R}^d\mbox{ and }t>0.
$$
where
$$
\mathfrak{s}_{k,j}^{E,h}(x,y,t)=\frac{t^{k+1}}{2\sqrt{\pi}}\int_E\partial _t^k[te^{-\frac{t^2}{u+h}}]\partial_{x_j}h_u^{I,B_1}(x,y)\frac{du}{(u+h)^{\frac{3}{2}}},\quad x,y\in \mathbb{R}^d \mbox{ and }t>0.
$$

Assume that $X$ is one of the following Banach spaces: $L^\infty ((0,\infty ),dt)$, $L^2((0,\infty ),dt/t)$ and $V_\rho (0,\infty )$. We consider the operator
$$
\widetilde{S}_{k,j}^{E,h}(f)(x)=\left\|S_{k,j}^{E,h}(f)(x,\cdot )\right\|_X,\quad x\in \mathbb{R}^d.
$$
\noindent {\sc Claim 1} {\it Let $1\leq p<\infty$. We define $\delta_p$ as follows
$$
\delta_p=\left\{\begin{array}{ll} 
\displaystyle \frac{1}{10}, & p=1, \\[0.3cm]
\displaystyle \frac{1}{2}\Big(1+\max\big\{1-\frac{1}{d},1-\frac{1}{p}\big\}\Big), & 1<p<\infty.
\end{array}\right.
$$
We denote $E_p$ the sets $(0,t_0)$ or $J_D^\#$ in Lemma \ref{Lem2.2} associated to $t_0$ and satisfying the covering property \eqref{covering}.

Then, for every $h\geq 0$, the operator $\widetilde{S}_{k,j}^{E_p,h}$ is bounded from $L^p(\mathbb{R}^d,\gamma_{\infty,1})$ into itself, when $1<p<\infty$, and from $L^1(\mathbb{R}^d,\gamma_{\infty ,1})$ into $L^{1,\infty}(\mathbb{R}^d,\gamma_{\infty,1})$, when $p=1$.}

Suppose that the claim has been proved. Since 
$$
(0,\infty)\setminus{\mathcal{N}}=\Big[\bigcup_{\ell =0}^n((0,t_0)+\ell t_0)\Big]\cup \Big[\bigcup_{\ell =0}^m(J_P^\#+\ell \beta)\Big]
$$
we can write
\begin{align*}
    \int_0^\infty \partial_t^k[te^{-\frac{t^2}{4u}}]\partial_{x_j}h_u^{I,B_1}(x,y)\frac{du}{u^{\frac{3}{2}}}&=\left(\sum_{\ell =0}^n\int_{(0,t_0)+\ell t_0}+\sum_{\ell =0}^m\int_{J_D^\#+\ell \beta}\right)\partial_t^k[te^{-\frac{t^2}{4u}}]\partial_{x_j}h_u^{I,B_1}(x,y)\frac{du}{u^{\frac{3}{2}}}\\
    &=\sum_{\ell =0}^n\int_{(0,t_0)}\partial_t^k[te^{-\frac{t^2}{4(u+\ell t_0)}}]\partial _{x_j}h_{u+\ell t_0}^{I,B_1}(x,y)\frac{du}{(u+\ell t_0)^{\frac{3}{2}}}\\
    &\quad +\sum_{\ell =0}^m\int_{J_D^\#}\partial_t^k[te^{-\frac{t^2}{2(u+\ell \beta)}}]\partial _{x_j}h_{u+\ell \beta}^{I,B_1}(x,y)\frac{du}{(u+\ell \beta)^{\frac{3}{2}}},\quad x,y\in \mathbb{R}^d\mbox{ and }t>0.
\end{align*}
By using the semigroup property of $\{H_t^{I,B_1}\}_{t>0}$ we deduce that
$$
t^{k+1}\partial_t^k\partial _{x_j}P_t^{I,B_1}(f)(x)=\sum_{\ell =0}^nS_{k,j}^{(0,t_0),\ell t_0}(H_{\ell t_0}^{I,B_1}(f))(x,t)+\sum_{\ell =0}^mS_{k,j}^{J_D^\#,\ell \beta}(H_{\ell \beta}^{I,B_1}(f))(x,t),\quad x\in \mathbb{R}^d\mbox{ and }t>0.
$$
Since the semigroup $\{H_t^{I,B_1}\}_{t>0}$ is contractive in $L^p(\mathbb{R}^d,\gamma_{\infty ,1})$, for every $1\leq p<\infty $, the Claim 1 allows us to conclude that the operator $T_{k,j}^X$ is bounded from $L^p(\mathbb{R}^d,\gamma_{\infty ,1})$ into itself, for every $1<p<\infty$, and from $L^1(\mathbb{R}^d,\gamma_{\infty ,1})$ into $L^{1,\infty }(\mathbb{R}^d,\gamma_{\infty ,1})$.

Our objective is to prove the Claim 1. In order to see the $L^p$-boundedness properties of the operator $\widetilde{S}_{k,j}^{E,h}$ we study separately the local part and the global part of $\widetilde{S}_{k,j}^{E,h}$. We analyze firstly the local part $\widetilde{S}_{k,j,{\rm loc}}^{E,h}$ of $\widetilde{S}_{k,j}^{E,h}$. We consider the operator
$$
U_{k,j}^{E,h}(f)(x,t)=\int_{\mathbb{R}^d}\mathfrak{U}_{k,j}^{E,h}(x-y,t)f(y)dy, \quad x\in \mathbb{R}^d\mbox{ and }t>0,
$$
where 
$$
\mathfrak{U}_{k,j}^{E,h}(z,t)=\frac{t^{k+1}}{2\sqrt{\pi }}\int_E\partial _t^k\big[te^{-\frac{t^2}{4(u+h)}}\big]\partial _{x_j}W_u(z)\frac{du}{(u+h)^{\frac{3}{2}}},\quad z\in \mathbb{R}^d\mbox{ and }t>0.
$$
Here $W_u$, $u>0$ denotes the classical heat kernel given by
$$
W_u(z)=\frac{e^{-\frac{|z|^2}{2u}}}{(2\pi u)^{d/2}},\quad z\in \mathbb{R}^d\mbox{ and }u>0.
$$

We also define 
$$
\widetilde{U}_{k,j}^{E,h}(f)(x)=\left\|U_{k,j}^{E,h}(f)(x,\cdot )\right\|_X,\quad x\in \mathbb{R}^d.
$$
By using Minkowski inequality we deduce that
$$
\Big|\widetilde{S}_{k,j,{\rm loc}}^{E,h}(f)(x)-\widetilde{U}_{k,j,{\rm loc}}^{E,h}(f)(x)\Big|\leq \int_{\mathbb{R}^d}\Big\|\mathfrak{s}_{k,j}^{E,h}(x,y,\cdot )-\mathfrak{U}_{k,j}^{E,h}(x-y,\cdot )\Big\|_X\varphi (x,y)|f(y)|dy,\quad x\in \mathbb{R}^d.
$$

\noindent{\sc Claim 2}. {\it Let $E\subset(0,\infty)$ and $h\geq 0$. The operator $D_{k,j,{\rm loc}}^{E,h}$ defined by 
$$
D_{k,j,{\rm loc}}^{E,h}(f)(x)=\int_{\mathbb{R}^d}\big\|\mathfrak{s}_{k,j}^{E,h}(x,y,\cdot )-\mathfrak{U}_{k,j}^{E,h}(x-y,\cdot )\Big\|_X\varphi (x,y)f(y)dy,\quad x\in \mathbb{R}^d,
$$
is bounded from $L^p(\mathbb{R}^d,dx)$ into itself, and from $L^p(\mathbb{R}^d,\gamma_{\infty,1})$ into itself, for every  $1\leq p\leq \infty$}.

\noindent{\sc Claim 3}. {\it Let $h\geq 0$ and $E\subset (0,\infty)$ such that $E\subset (0,\eta,)$ or $E\subset (\eta,\infty)$, for some $\eta >0$.  The operator $\widetilde{U}_{k,j,{\rm loc}}^{E,h}$ is bounded from $L^p(\mathbb{R}^d,dx)$ into itself, for every $1<p<\infty$, and from $L^1(\mathbb{R}^d,dx)$ into $L^{1,\infty }(\mathbb{R}^d,dx)$.}

By using \cite[Lemma 3.6]{GCMST2} from Claim 3 we deduce that the operator $\widetilde{U}_{k,j,{\rm loc}}^{E,h}$ is bounded from $L^p(\mathbb{R}^d,\gamma_{\infty,1})$ into itself, for every $1<p<\infty$, and from $L^1(\mathbb{R}^d,\gamma_{\infty ,1})$ into $L^{1,\infty }(\mathbb{R}^d,\gamma_{\infty ,1})$. Then, according to Claim 2 it follows that the operator $\widetilde{S}_{k,j,{\rm loc}}^{E,h}(f)$ is bounded from $L^p(\mathbb{R}^d,\gamma_{\infty,1})$ into itself, for every $1<p<\infty$, and from $L^1(\mathbb{R}^d,\gamma_{\infty ,1})$ into $L^{1,\infty }(\mathbb{R}^d,\gamma_{\infty ,1})$.

Minkowski's inequality leads to 
$$
\widetilde{S}_{k,j,{\rm glob}}^{E,h}(f)(x)\leq \int_{\mathbb{R}^d}\big\|\mathfrak{s}_{k,j}^{E,h}(x,y,\cdot )\big\|_X(1-\varphi (x,y))|f(y)|dy,\quad x\in \mathbb{R}^d.
$$

\noindent {\sc Claim 4}. {\it Let $h\geq 0$. Assume that $1\leq p <\infty$ and that $E_p$ is the set associated to $\delta_p$ as in Claim 1. Then, the operator $\widetilde{V}_{k,j,{\rm glob}}^{E_p,h}$ defined by
$$
\widetilde{V}_{k,j,{\rm glob}}^{E_p,h}(f)(x)=\int_{\mathbb{R}^d}\big\|\mathfrak{s}_{k,j}^{E_p,h}(x,y,\cdot )\big\|_X(1-\varphi (x,y))|f(y)|dy,\quad x\in \mathbb{R}^d,
$$
is bounded from $L^p(\mathbb{R}^d,\gamma_{\infty,1})$ into itself, when $1<p<\infty$, and from $L^1(\mathbb{R}^d,\gamma_{\infty ,1})$ into $L^{1,\infty }(\mathbb{R}^d,\gamma_{\infty ,1})$.}

Thus, Claim 1 is proved when we establish Claims 2, 3 and 4.

Let $k\in \mathbb{N}$, $j=1,...,d$, and $M\geq 1$. We consider the operator $\mathbb{T}_{k,j,M}^X$ defined by $$
\mathbb{T}_{k,j,M}^X(f)(x)=\left\|t\rightarrow t^{k+\frac{1}{2}}\partial_t^k\partial _{x_j}(I+t\mathcal{L}^{I,B_1})^{-M}(f)(x)\right\|_X,\quad x\in \mathbb{R}^d.
$$
$\mathbb{T}_{k,j,M}^X$ reduces to $S_{*,k,j,M}^{I,B_1}$, $G_{k,j,M}^{I,B_1}$ and $\mathcal{V} _{\rho ,k,j,M}^{I,B_1}$ when $X=L^\infty ((0,\infty ),dt)$, $X=L^2((0,\infty ),\frac{dt}{t})$ and $X=V_\rho (0,\infty )$, respectively.

We have that
$$
(I+t\mathcal{L}^{I,B_1})^{-M}f=\frac{1}{\Gamma (M)}\int_0^\infty e^{-u}\mathcal{H}_{ut}^{I,B_1}(f)u^{M-1}du.
$$
It is clear that 
$$
\sup_{t>0}|(I+t\mathcal{L}^{I,B_1})^{-M}f|\leq \sup_{t>0}|\mathcal{H}_t^{I,B_1}(f)|.
$$

According to \cite[Corollary 4.3 and Theorem 5.1]{MN1} we have that the maximal operator $\sup_{t>0}|(I+t\mathcal{L}^{I,B_1})^{-M}|$ is bounded from $L^p(\mathbb{R}^d,\gamma_{\infty ,1})$ into itself, for every $1<p<\infty$, provided that the Ornstein-Uhlenbeck operator $\mathcal{L}^{I,B_1}$ is normal, and from $L^1(\mathbb{R}^d,\gamma_{\infty ,1})$ into $L^{1,\infty }(\mathbb{R}^d,\gamma_{\infty ,1})$ when the matrix $R$ generates a periodic group $\{e^{Rt}\}_{t>0}$. In order to establish the $L^p$-boundedness properties for the maximal operators $S_{*,k,j,M}^{I,B_1}$ we need to work harder because, as far as we know, the $L^p$-boundedness properties for the corresponding maximal operators involving the heat semigroup $\{\mathcal{H}_t^{I,B_1}\}_{t>0}$ have not been studied. 

Let $f\in C_c^\infty (\mathbb{R}^d)$. We can write 
\begin{align*}
    (I+t\mathcal{L}^{I,B_1})^{-M}(f)(x)&=\frac{t^{-M}}{\Gamma (M)}\int_0^\infty e^{-\frac{u}{t}}\mathcal{H}_u^{I,B_1}(f)(x)u^{M-1}du\\
    &=\frac{t^{-M}}{\Gamma (M)}\int_{\mathbb{R}^d}f(y)\int_0^\infty e^{-\frac{u}{t}}h_u^{I,B_1}(x,y)u^{M-1}dudy,\quad x\in \mathbb{R}^d\mbox{ and }t>0.
\end{align*}
Then, for every $x\in \mathbb{R}^d$ and $t>0$,
$$
t^{k+\frac{1}{2}}\partial _t^k\partial _{x_j}(I+t\mathcal{L}^{I,B_1})^{-M}(f)(x)=\frac{t^{k+\frac{1}{2}}}{\Gamma (M)}\int_{\mathbb{R}^d}f(y)\int_0^\infty \partial_t^k[t^{-M}e^{-\frac{u}{t}}]\partial _{x_j}h_u^{I,B_1}(x,y)u^{M-1}dudy.
$$
Differentiation under the integral can  be justified as above by considering that $M>(d+1)/2$.

We take $E\subset(0,\infty)$ and $h\geq 0$.  As in the previous case, we define the operator $\mathbb{S}_{k,j,M}^{E,h}$ by
$$
\mathbb{S}_{k,j,M}^{E,h}(f)(x,t)=\int_{\mathbb{R}^d}\mathfrak{s}_{k,j,M}^{E,h}(x,y,t)f(y)dy,\quad x\in \mathbb{R}^d\mbox{ and }t>0,
$$
where
$$
\mathfrak{s}_{k,j,M}^{E,h}(x,y,t)=\frac{t^{k+\frac{1}{2}}}{\Gamma (M)}\int_0^\infty \partial _t^k[t^{-M}e^{-(u+h)/t}]\partial _{x_j}h_u^{I,B_1}(x,y)(u+h)^{M-1}du,\quad x,y\in \mathbb{R}^d\mbox{ and }t>0.
$$
In order to prove the $L^p$-boundedness properties of the operator $\mathbb{T}_{k,j,M}^X$ where $X=L^\infty ((0,\infty ),dt)$, $X=L^2((0,\infty ),\frac{dt}{t})$ and $X=V_\rho (0,\infty )$, we can proceed by following the same steps than in the previous case  by considering the operator
$$
\widetilde{\mathbb{S}}_{k,j,M}^{E,h}(f)(x)=\big\|\mathbb{S}_{k,j,M}^{E,h}(f)(x,\cdot)\big\|_X, \quad x\in \mathbb{R}^d.
$$

\begin{rem}
Let $k\in \mathbb{N}$, $\alpha =(\alpha _1,...,\alpha _d)\in \mathbb{N}^d$ such that $k+\widehat{\alpha}>0$ when $\widehat{\alpha}=\alpha _1+...+\alpha _d$. Assume that $X=L^\infty ((0,\infty ),dt)$, $X=L^2((0,\infty ),\frac{dt}{t})$ or $X=V_\rho (0,\infty )$. We define the operator $T_{k,\alpha}^X$ by
$$
T_{k,\alpha }^X(f)(x)=\left\|t\rightarrow t^{k+\widehat{\alpha}}\partial _t^k\partial_x^\alpha P_t^{I,B_1}(f)(x)\right\|_X,\quad x\in \mathbb{R}^d.
$$
Here $\partial _x^\alpha=\frac{\partial ^{\widehat{\alpha}}}{\partial x_1^{\alpha_1}\cdots \partial x_d^{\alpha _d}}$.

It is natural to ask if $L^p$-boundedness properties of this operator when $\widehat{\alpha}\not =1$ can be proved by using the procedure in this paper. At this moment we can not apply our procedure because we do not know how to deal with the global parts of the operators when $\widehat{\alpha }\not =1$.

We now comment about some special cases. We consider $\widehat{\alpha}=0$ and $X=L^\infty ((0,\infty ),dt)$. By using the method in \cite[\S 4]{LiSj} we can see that 
$$
T_{k,0}^{L^\infty ((0,\infty ),dt)}(f)\leq C\sup_{t>0}\frac{1}{t}\left|\int_0^t\mathcal{H}_s^{I,B_1}(f)ds\right|.
$$
Since $\{\mathcal{H}_t^{I,B_1}\}_{t>0}$ is contractive in $L^p(\mathbb{R}^d,\gamma_{\infty ,1})$, $1\leq p\leq \infty$, the Hopf-Dunford-Schwartz ergodic theorem (\cite[Lemma VIII.7.6 and Theorem VIII.7.7]{DS}) leads to $T_{k,0}^{L^\infty ((0,\infty ),dt)}$ is bounded from $L^p(\mathbb{R}^d,\gamma_{\infty ,1})$ into itself, for every $1<p<\infty$, and from $L^1(\mathbb{R}^d,\gamma_{\infty ,1})$ into $L^{1,\infty }(\mathbb{R}^d,\gamma_{\infty ,1})$.

We can write
$$
\partial _t^kP_t^{I,B_1}(x,y)=\frac{1}{\sqrt{\pi}}\int_0^\infty \partial _t^{k-1}[t e^{-\frac{t^2}{4u}}]\partial _uh_u^{I,B_1}(x,y)\frac{du}{\sqrt{u}},\quad x,y\in \mathbb{R}^d\mbox{ and }t>0.
$$

We have that
$$
\partial _uh_u^{I,B_1}(x,y)=\frac{1}{2}\sum_{j=1}^d\partial _{x_j}^2h_u^{I,B_1}(x,y)+\langle B_1x, \nabla_x h_u^{I,B_1}(x,y)\rangle,\quad x,y \in \mathbb{R}^d\mbox{ and }u>0.
$$
Then, the cases $\widehat {\alpha }=0$ and $\widehat{\alpha}=2$ are connected.
\end{rem}

The arguments used in the symmetric Ornstein-Uhlenbeck setting in \cite{HMMT}, \cite{P2}, \cite{P1} and \cite{PS} do not work for the global parts of the operator $T_{k,\alpha}^X$ in the nonsymmetric context. Our objective in a next paper is to establish $L^p$-boundedness properties of $T_{k,\alpha}^X$-type operators for general nonsymmetric Ornstein-Uhlenbeck operators by using some of the ideas developed by Casarino, Ciatti and Sj\"ogren (\cite{CCS2}, \cite{CCS1} and \cite{CCS3}).

\section{Proof of Claims 2 and 4}
Our objective in this section is to prove Claims 2 and 4 stated in the previous section for the operators in Theorem \ref{Th1.1}.
\subsection{Proof of Claim 2} We consider firstly the operators $P_{*,k,j}^{I,B_1}$, $g_{k,j}^{I,B_1}$ and $V_{\rho ,k,j}^{I,B_1}$. In the sequel $X$ represents one of the following Banach spaces: $L^\infty ((0,\infty ),dt)$, $L^2((0,\infty ),\frac{dt}{t})$ and $V_\rho (0,\infty )$. We are going to study the operator
$$
D_{k,j,{\rm loc}}^{E,h}(f)(x)=\int_{\mathbb{R}^d}\Big\|\mathfrak{s}_{k,j}^{E,h}(x,y,\cdot)-\mathfrak{U}_{k,j}^{E,h}(x-y,\cdot )\Big\|_X\varphi (x,y)f(y)dy,\quad x\in \mathbb{R}^d,
$$
where $E\subset(0,\infty)$ and $h\geq 0$. The definitions can be found in Section 2.

We can write, for each $x,y\in \mathbb{R}^d$ and $t>0$,
$$
\mathfrak{s}_{k,j}^{E,h}(x,y,t)-\mathfrak{U}_{k,j}^{E,h}(x-y,t)=\frac{t^{k+1}}{2\sqrt{\pi}}\int_E\frac{\partial _t^k[te^{-\frac{t^2}{4(u+h)}}]}{(u+h)^{\frac{3}{2}}}\big(\partial _{x_j}h_u^{I,B_1}(x,y)-\partial _{x_j}W_u(x-y)\big)du.
$$
By using Minkowski inequality we get
\begin{equation}\label{3.1}
\big\|\mathfrak{s}_{k,j}^{E,h}(x,y,\cdot)-\mathfrak{U}_{k,j}^{E,h}(x-y,\cdot)\big\|_X\leq C\int_E \frac{\|t^{k+1}\partial _t^k[te^{-\frac{t^2}{4(u+h)}}]\|_X}{(u+h)^{\frac{3}{2}}}\big|\partial _{x_j}h_u^{I,B_1}(x,y)-\partial _{x_j}W_u(x-y)\big|du,\quad x,y\in\mathbb{R}^d.
\end{equation}
According to \cite[Lemma 4]{BCCFR} it follows that
\begin{equation}\label{3.2}
\big|\partial _t^k[te^{-\frac{t^2}{4(u+h)}}]\big|\leq C\frac{e^{-\frac{t^2}{8(u+h)}}}{(u+h)^{(k-1)/2}},\quad t,u\in (0,\infty ).
\end{equation}
We have that
\begin{equation}\label{3.2'}
\big|t^{k+1}\partial _t^k[te^{-\frac{t^2}{4(u+h)}}]\big|\leq C\frac{t^{k+1}e^{-\frac{t^2}{8(u+h)}}}{(u+h)^{(k-1)/2}}\leq C(u+h),\quad t,u\in (0,\infty ).
\end{equation}
By using again \eqref{3.2} we obtain
\begin{equation}\label{3.2.1}
   \left\|t^{k+1}\partial _t^k[te^{-\frac{t^2}{4(u+h)}}]\right\|_{L^2((0,\infty ),\frac{dt}{t})}\leq \frac{C}{(u+h)^{(k-1)/2}}\left(\int_0^\infty |t^{k+1}e^{-\frac{t^2}{8(u+h)}}|^2\frac{dt}{t}\right)^{\frac{1}{2}}\leq C(u+h),\quad u>0.
\end{equation}

Suppose that $g:(0,\infty)\longrightarrow \mathbb{C}$ is a derivable function. If $t_1>t_2>\cdots >t_k>0$ we have that
$$
\left(\sum_{j=1}^{k-1}|g(t_{j+1})-g(t_j)|^\rho \right)^{1/\rho }=\left(\sum_{j=1}^{k-1}\Big|\int_{t_j}^{t_{j+1}}g'(t)dt\Big|^\rho \right)^{1/\rho}\leq \sum_{j=1}^{k-1}\int_{t_j}^{t_{j+1}}|g'(t)|dt\leq \int_0^\infty |g'(t)|dt.
$$
Then,
$$
V_\rho (g)\leq \int_0^\infty |g'(t)|dt.
$$
From \eqref{3.2} we deduce that
\begin{align}\label{3.2.2}
   \left\|t^{k+1}\partial _t^k[te^{-\frac{t^2}{4(u+h)}}]\right\|_{V_\rho}&\leq \int_0^\infty \Big|\partial _t(t^{k+1}\partial _t^k(te^{-\frac{t^2}{4(u+h)}}))\Big|dt \leq C\left(\int_0^\infty |t^k\partial _t^k(te^{-\frac{t^2}{4(u+h)}})|dt\right.\nonumber\\
   &\left.\quad + \int_0^\infty |t^{k+1}\partial _t^{k+1}(te^{-\frac{t^2}{4(u+h)}})|dt\right)\nonumber\\
   &\leq C\left((u+h)^{-(k-1)/2}\int_0^\infty t^ke^{-\frac{t^2}{8(u+h)}}dt+(u+h)^{-k/2}\int_0^\infty t^{k+1}e^{-\frac{t^2}{4(u+h)}}dt\right)\nonumber\\
   &\leq C(u+h),\quad u>0.
\end{align}
By \eqref{3.1} we get
\begin{align}\label{3.3}
\Big\|\mathfrak{s}_{k,j}^{E,h}(x,y,\cdot )-\mathfrak{U}_{k,j}^{E,h}(x-y,\cdot )\Big\|_X\leq C\int_0^\infty \frac{1}{\sqrt{u}}|\partial _{x_j}h_u^{I,B_1}(x,y)-\partial _{x_j}W_u(x-y)|du,\quad x,y\in \mathbb{R}^d.
\end{align}
We are going to see that
\begin{equation}\label{3.4}
\int_0^\infty \frac{1}{\sqrt{u}}|\partial_{x_j}h_u^{I,B_1}(x,y)-\partial _{x_j}W_u(x-y)|du\leq C\frac{1+|x|}{|x-y|^{d-1}},\quad (x,y)\in L_2.
\end{equation}
When $B_1=-I$ the operator $\mathcal{L}^{I,B_1}$ is the symmetric Ornstein-Uhlenbeck and we have that
$$
h_u^{I,-I}(x,y)=\frac{e^{-\frac{|e^{-u}x-y|^2}{1-e^{-2u}}}}{(2\pi (1-e^{-2u}))^{d/2}},\quad x,y\in \mathbb{R}^d\mbox{ and }u>0.
$$
By using \cite[Lemma 3.4]{HTV} we obtain
$$
\int_0^\infty \frac{1}{\sqrt{u}}|\partial_{x_j}h_u^{I,-I}(x,y)-\partial _{x_j}W_u(x-y)|du\leq C\frac{1+|x|}{|x-y|^{d-1}},\quad (x,y)\in L_2. 
$$
Then, \eqref{3.4} will be proved when we see that
\begin{equation}\label{3.5}
\int_0^\infty \frac{1}{\sqrt{u}}|\partial_{x_j}h_u^{I,B_1}(x,y)-\partial _{x_j}h_u^{I,-I}(x,y)|du\leq C\frac{1+|x|}{|x-y|^{d-1}},\quad (x,y)\in L_2.
\end{equation}
Since $R+R^*=0$ we get
$$
|e^{uB_1}x-y|^2=\langle e^{-uR}(e^{-u}x-e^{uR}y),e^{-uR}(e^{-u}x-e^{uR}y)\rangle=|e^{-u}x-e^{uR}y|^2,\quad x,y\in \mathbb{R}^d\mbox{ and }u>0.
$$
We have that
\begin{align}\label{3.5.1}
\partial_{x_j}h_u^{I,B_1}(x,y)=-\frac{2e^{-u}}{(2\pi )^{\frac{d}{2}}}(e^{-u}x_j-(e^{uR}y)_j)\frac{e^{-\frac{|e^{-u}x-e^{uR}y|^2}{1-e^{-2u}}}}{(1-e^{-2u})^{\frac{d}{2}+1}},\quad x,y\in \mathbb{R}^d\mbox{ and }u>0,
\end{align}
and 
\begin{equation}\label{3.5.2}
\partial_{x_j}h_u^{I,-I}(x,y)=-\frac{2e^{-u}}{(2\pi )^{\frac{d}{2}}}(e^{-u}x_j-y_j)\frac{e^{-\frac{|e^{-u}x-y|^2}{1-e^{-2u}}}}{(1-e^{-2u})^{\frac{d}{2}+1}},\quad x,y\in \mathbb{R}^d\mbox{ and }u>0.
\end{equation}
We can write, for every $x,y\in \mathbb{R}^d$ and $u>0$,
\begin{align*}
    \partial_{x_j}h_u^{I,B_1}(x,y)-\partial _{x_j}h_u^{I,-I}(x,y)&=-\frac{2e^{-u}}{(2\pi )^{\frac{d}{2}}}\frac{1}{(1-e^{-2u})^{\frac{d}{2}+1}}\\
    &\times \left[\Big(e^{-\frac{|e^{-u}x-e^{uR}y|^2}{1-e^{-2u}}}-e^{-\frac{|e^{-u}x-y|^2}{1-e^{-2u}}}\Big)(e^{-u}x_j-(e^{uR}y)_j)+e^{-\frac{|e^{-u}x-y|^2}{1-e^{-2u}}}(y_j-(e^{uR}y)_j)\right].
\end{align*}
We need to establish some estimations. We have that
$$
|e^{-a}-e^{-b}|\leq e^{-\min\{a,b\}}|a-b|,\quad a,b>0.
$$
Then, according to \cite[Lemma 3.3 (i)]{MN2} we obtain
$$
\left|e^{-\frac{|e^{-u}x-e^{uR}y|^2}{1-e^{-2u}}}-e^{-\frac{|e^{-u}x-y|^2}{1-e^{-2u}}}\right|\leq C
\frac{e^{-c\frac{|x-y|^2}{1-e^{-2u}}}}{1-e^{-2u}}
\big||e^{-u}x-y|^2-|e^{-u}x-e^{uR}y|^2\big|,\quad (x,y)\in L_2\mbox{ and }u>0.
$$

We manipulate to get, for every $x,y\in \mathbb{R}^d$ and $u>0$,
\begin{align*}
    |e^{-u}x-y|^2-|e^{-u}x-e^{uR}y|^2&=|e^{-u}x-y|^2 -|e^{-u}x-y-(e^{uR}-I)y|^2\\
    &=-|(e^{uR}-I)y|^2+2\langle e^{-u}x-y,(e^{uR}-I)y\rangle\\
    &=-|(e^{uR}-I)y|^2+2\Big[(e^{-u}-1)\langle x, (e^{uR}-I)y\rangle+\langle x-y,(e^{uR}-I)y\rangle\Big].
\end{align*}
 
It follows that
\begin{equation*}
    \left| |e^{-u}x - y|^2 - |e^{-u}x - e^{uR}y|^2
    \right| \leq C 
    (u^2|y|^2 + u^2|x||y| + u|x-y||y|),
    \quad x,\; y\in\mathbb{R}^d \text{ and }u>0.
\end{equation*}

We conclude that, for $x$, $y\in\mathbb{R}^d$ and $u\in (0,1)$,
\begin{equation*}
    \left|e^{-\frac{|e^{-u}x - e^{uR}y|^2}{1-e^{-2u}}} - 
    e^{-\frac{|e^{-u}x - y|^2}{1-e^{-2u}}}
    \right| \leq  C
    e^{-c \frac{|x-y|^2}{u}} 
    \big[u|y|(|x| + |y|) + |x-y||y|\big].
\end{equation*}

On the other hand we have that, for $x$, $y\in\mathbb{R}^d$ and $u>0$,
$$
|y_j - (e^{uR}y)_j|\leq |(e^{uR}-I)y|\leq Cu|y|,
$$
and 
$$
|e^{-u}x_j - (e^{-uR}y)_j|  \leq|e^{-u}x_j - x_j| + | x_j - y_j | +
|y_j - (e^{uR}y)_j|
\leq C\big[ u(|x| + |y|) + |x-y|\big].
$$

We get, for $x$, $y\in\mathbb{R}^d$ and $ u\in(0,1)$,   
\begin{align*}
        |\partial_{x_j} h^{I, B_1}_u (x,y) - 
        \partial_{x_j} h^{I, -I}_u (x,y)|
        &\leq C \frac{e^{-c\frac{|x-y|^2}{u}}}{u^{\frac{d}{2}+1}}
        \left(
        \big[u|y|(|x|+ |y|) + |x-y||y|\big]
        \big[u(|x| + |y|) + |x-y|\big] + u|y|
        \right)\\
        &\leq 
    C \frac{e^{-c\frac{|x-y|^2}{u}}}{u^{\frac{d}{2}+1}}
        \left(
        \big[u|y|(|x| + |y|) + \sqrt{u}|y|\big]
        \big[u(|x| + |y|) + \sqrt{u}\big] + u|y|
        \right)\\
        &= C \frac{e^{-c\frac{|x-y|^2}{u}}}{u^{\frac{d}{2}}}
        \left(
        \big[\sqrt{u}|y|(|x| +|y|)+ |y|\big]
        \big[\sqrt{u}(|x| + |y|) + 1\big] + |y|
        \right).
\end{align*}
Let us define the function $m$ by $m(x) = \min\left\{1 , |x|^{-2}\right\}$, $x\in \mathbb{R}^d\setminus\{0\}$, and $m(0)=1$. When $(x,y)\in L_2$ and $0<u<m(x)$, we have that $|y|\leq C(1+|x|)$ and $\sqrt{u}(|x|+|y|)\leq C$. Then, it follows that
\begin{equation*}
   \big[\sqrt{u}|y|(|x|+|y|) + |y|\big]
        \big[\sqrt{u}(|x| + |y|) + 1\big] + |y|\leq C(1+|x|), \quad (x,y)\in L_2\mbox{ and }0<u<m(x).
\end{equation*}
 Thus we obtain that
\begin{equation*}
    |\partial_{x_j} h^{I, B_1}_u (x,y) - 
    \partial_{x_j} h^{I, -I}_u (x,y)|
    \leq C \frac{e^{-c\frac{|x-y|^2}{u}}}{u^{\frac{d}{2}}} (1+ |x|),\quad (x,y)\in L_2\mbox{ and }0<u<m(x).
\end{equation*}

By using this estimation, we get, when $(x,y)\in L_2$,
\begin{equation*}
    \begin{split}
        \int_0^{m(x)} \frac{1}{\sqrt{u}}|\partial_{x_j} h^{I, B_1}_u  (x,y) - \partial_{x_j} h^{I, -I}_u (x,y)| du & \leq 
        C (1+ |x|)\int_0^{m(x)} \frac{e^{-c\frac{|x-y|^2}{u}}}{u^{\frac{d + 1}{2}}} du \leq C \frac{1+|x|}{|x-y|^{d-1}}.
            \end{split}
\end{equation*}

On the other hand, by \eqref{3.5.1} and \eqref{3.5.2} and using Lemma~\ref{Lem2.1} we deduce that
\begin{equation*}
    \begin{split}
        \int_{m(x)}^\infty \frac{1}{\sqrt{u}}
        |\partial_{x_j} h^{I, B_1}_u  (x,y) - \partial_{x_j} h^{I, -I}_u (x,y)| du
        & \leq 
        \int_{m(x)}^\infty \frac{1}{\sqrt{u}}
        |\partial_{x_j} h^{I, B_1}_u  (x,y)| du+ \int_{m(x)}^\infty \frac{1}{\sqrt{u}}
        |\partial_{x_j} h^{I, -I}_u (x,y)| du
                \\ &\hspace{-6cm} \leq C\left(
        \int_{m(x)}^\infty 
        \frac{e^{-\frac{|e^{-u}x-e^{uR}y|^2}{1-e^{-2u}}}}{(1-e^{-2u})^{\frac{d}{2}+1}\sqrt{u}}  |e^{-u}x-e^{uR}y| e^{-u} du+ 
        \int_{m(x)}^\infty 
        \frac{e^{-\frac{|e^{-u}x-y|^2}{1-e^{-2u}}}}{(1-e^{-2u})^{\frac{d}{2}+1}\sqrt{u}}
        |e^{-u}x-y| e^{-u} du\right)
                \\ & \hspace{-6cm}\leq C
        \int_{m(x)}^\infty 
        \frac{e^{-c\frac{|x-y|^2}{1-e^{-2u}}}e^{-u}}{(1-e^{-2u})^{\frac{d+1}{2}}\sqrt{u}}
         du\leq  C
        \int_{m(x)}^\infty 
        \frac{e^{-\frac{|x-y|^2}{u}}}{u^{\frac{d}{2}+1}} du\leq  \frac{C}{\sqrt{m(x)}}
        \int_{m(x)}^\infty 
        \frac{e^{-\frac{|x-y|^2}{u}}}{u^{\frac{d+1}{2}}} du
        \\ & \hspace{-6cm}\leq  \frac{C}{\sqrt{m(x)}|x-y|^{d-1}}
         \leq C \frac{1+|x|}{|x-y|^{d-1}},\quad (x,y)\in L_2.
    \end{split}
\end{equation*}
 We conclude that~\eqref{3.5} holds. Thus we proved~\eqref{3.4}.

According to~\eqref{3.3} and~\eqref{3.4}, by using Lemma~\ref{Lem2.5} we conclude that the operator $D^{E,h}_{k,j,\text{\rm loc}}$ is bounded from $L^p(\mathbb{R}^d,\gamma_{\infty,1})$ into itself, for every $1\leq p \leq \infty$.

We now consider the operators $S^{I,B_1}_{*,k,j,M}$, $G^{I,B_1}_{k,j,M}$ and $\mathcal{V}^{I,B_1}_{\rho,k,j,M}$. Let $E\subset (0,\infty )$ and $h\geq 0$. We define the operator
\begin{equation*}
    \mathcal{H}^{E, h}_{k,j,M}(f)(x,t) =
    \int_{\mathbb{R}^d} H^{E,h}_{k,j,M} (x-y,t) f(y)dy, \quad x\in\mathbb{R}^d \text{ and } t>0,
\end{equation*}
where 
\begin{equation*}
    H^{E,h}_{k,j,M}(z,t) =
    \frac{t^{k+\frac{1}{2}}}{\Gamma(M)} \int_E
    \partial^k_t \left[t^{-M}e^{-\frac{u+h}{t}}
    \right]  \partial_{x_j} W_u(z) (u+h)^{M-1}du,\quad z\in \mathbb{R}^n\mbox{ and }t>0.
\end{equation*}
Minkowski inequality leads to
\begin{equation*}
    \left\| \mathbb{S}^{E,h}_{k,j,M} (f)(x,\cdot) -
    \mathcal{H}^{E,h}_{k,j,M} (f)(x,\cdot)
    \right\|_X \leq \int_{\mathbb{R}^d}
    \left\| \mathfrak{s}^{E,h}_{k,j,M} (x, y, \cdot) -
    H^{E,h}_{k,j,M} (f)(x-y,\cdot)
    \right\|_X |f(y)|dy,\quad x\in \mathbb{R}^d.
\end{equation*}

Our objective is to see that the operator $Z^{E,h}_{k,j,M,\text{loc}}$ defined by
\begin{equation*}
    Z^{E,h}_{k,j,M,\text{loc}} (f)(x) =
    \int_{\mathbb{R}^d}
    \left\| \mathfrak{s}^{E,h}_{k,j,M} (x, y, \cdot) - H^{E,h}_{k,j,M} (f)(x-y,\cdot)
    \right\|_X  \varphi(x,y)|f(y)|dy, \quad x\in\mathbb{R}^d,
\end{equation*}
is bounded from $L^p(\mathbb{R}^d,\gamma_{\infty,1})$ into itself, for every $1\leq p \leq \infty$.

By using again Minkowski inequality we get
\begin{equation*}
    \begin{split}
    \left\| \mathfrak{s}^{E,h}_{k,j,M} (x, y, \cdot) \right.&\left. - H^{E,h}_{k,j,M} (f)(x- y,\cdot)
    \right\|_X  \\ & \hspace{-2cm}\leq 
    \frac{1}{\Gamma(M)} \int_E \left\| 
    t^{k+\frac{1}{2}} \partial^k_t \left[t^{-M}
    e^{-\frac{u+h}{t}}
    \right] \right\|_X 
    |\partial_{x_j}h^{I,B_1}_u(x,y) - \partial_{x_j}W_u(x-y)|(u+h)^{M-1}du,\quad x,y\in \mathbb{R}^d.
    \end{split}
\end{equation*}
We are going to see that
\begin{equation}\label{3.6}
\left\|t^{k+\frac{1}{2}}\partial_t^k\big[t^{-M}e^{-\frac{u+h}{t}}\big]\right\|_X\leq C(u+h)^{\frac{1}{2}-M},\quad u>0.
\end{equation}
We firstly consider $k=0$. We have that
$$
\left\|t^{\frac{1}{2}-M}e^{-\frac{u+h}{t}}\right\|_{L^\infty ((0,\infty ),dt)}\leq C(u+h)^{\frac{1}{2}-M},\quad u>0,
$$
and
$$
\left\|t^{\frac{1}{2}-M}e^{-\frac{u+h}{t}}\right\|_{L^2 ((0,\infty ),\frac{dt}{t})}=\left(\int_0^\infty t^{-2M}e^{-2\frac{u+h}{t}}dt\right)^{\frac{1}{2}}\leq C(u+h)^{\frac{1}{2}-M},\quad u>0,
$$
provided that $M>\frac{1}{2}$.

We also get
$$
\left\|t^{\frac{1}{2}-M}e^{-\frac{u+h}{t}}\right\|_{V_\rho (0,\infty )}\leq \int_0^\infty \Big|\partial _t\big[t^{\frac{1}{2}-M}e^{-\frac{u+h}{t}}\big]\Big|dt\leq C\int_0^\infty t^{-M-\frac{1}{2}}e^{-c\frac{u+h}{t}}dt\leq C(u+h)^{\frac{1}{2}-M},\quad u>0,
$$
when $M>\frac{1}{2}$.

Suppose now $k\geq 1$. We can write 
$$
\partial _t^k=\sum_{i=0}^{k-1}c_iw^{2k-i}\partial _w^{k-i},\quad w=\frac{1}{t}\in (0,\infty ),
$$
where $c_i\in \mathbb{R}$, $i=0,...,k-1$. It follows that
\begin{equation}\label{3.6.1}
\partial_t^k\big[t^{-M}e^{-\frac{u+h}{t}}\big]=\sum_{i=0}^{k-1}\sum_{\ell =0}^{k-i}c_{i,\ell}w^{2k+M-\ell -i}e^{-(u+h)w}(u+h)^{k-i-\ell}, \quad w=\frac{1}{t},\quad u\in (0,\infty ),
\end{equation}
being $c_{i,\ell}\in \mathbb{R}$, $i=0,...,k-1$, $\ell =0,...,k-i$. Here $c_{i,\ell}=0$, when $M<\ell \leq k-i$, $i=0,...,k-1$.

We have that
\begin{align*}
\left\|t^{k+\frac{1}{2}}\partial_t^k\big[t^{-M}e^{-\frac{u+h}{t}}\big]\right\|_{L^\infty ((0,\infty ),dt)}&\leq C\sum_{i=0}^{k-1}\sum_{\ell =0}^{k-i}(u+h)^{k-i-\ell}\sup_{w\in (0,\infty )}w^{k+M-\ell -i-\frac{1}{2}}e^{-(u+h)w}\\
&\leq C(u+h)^{\frac{1}{2}-M},\quad u>0,
\end{align*}
when $M>\frac{1}{2}$.

We also obtain
\begin{align*}
\left\|t^{k+\frac{1}{2}}\partial_t^k\big[t^{-M}e^{-\frac{u+h}{t}}\big]\right\|_{L^2((0,\infty ),\frac{dt}{t})}&\leq C\sum_{i=0}^{k-1}\sum_{\ell =0}^{k-i}(u+h)^{k-i-\ell}\left(\int_0^\infty w^{2k+2M-2\ell -2i-2}e^{-(u+h)w}dw\right)^{\frac{1}{2}}\\
&\leq C(u+h)^{\frac{1}{2}-M},\quad u>0,
\end{align*}
and
\begin{align*}
\left\|t^{k+\frac{1}{2}}\partial_t^k\big[t^{-M}e^{-\frac{u+h}{t}}\big]\right\|_{V_\rho (0,\infty )}&\leq \int_0^\infty \Big|\partial _t\Big[t^{k+\frac{1}{2}}\partial_t^k\big[t^{-M}e^{-\frac{u+h}{t}}\big]\Big]\Big|dt\\
&\leq C\left(\int_0^\infty \Big|t^{k-\frac{1}{2}}\partial_t^k\big[t^{-M}e^{-\frac{u+h}{t}}\big]\Big|dt+\int_0^\infty \Big|t^{k+\frac{1}{2}}\partial_t^{k+1}\big[t^{-M}e^{-\frac{u+h}{t}}\big]\Big|dt\right)\\
&\leq C\left(\sum_{i=0}^{k-1}\sum_{\ell =0}^{k-i}(u+h)^{k-i-\ell}\int_0^\infty w^{k+M-\ell -i-\frac{3}{2}}e^{-(u+h)w}dw\right.\\
&\left.\quad +\sum_{i=0}^k\sum_{\ell =0}^{k+1-i}(u+h)^{k+1-i-\ell}\int_0^\infty w^{k+M-\ell -i-\frac{1}{2}}e^{-(u+h)w}dw\right)\\
&\leq C(u+h)^{\frac{1}{2}-M},\quad u>0,
\end{align*}
provided that $M>\frac{1}{2}$. Thus \eqref{3.6} is established.

From \eqref{3.6} we deduce that
$$
\left\|\mathfrak{s}_{k,j,M}^{E,h}(x,y,\cdot )-H_{k,j,M}^{E,h}(x-y,\cdot )\right\|_X\leq C\int_0^\infty \frac{1}{\sqrt{u}}|\partial _{x_j}h_u^{I,B_1}(x,y)-\partial _{x_j}W_u(x-y)|du,\quad x,y\in \mathbb{R}^d.
$$
By using \eqref{3.4} we get
$$
\left\|\mathfrak{s}_{k,j,M}^{E,h}(x,y,\cdot )-H_{k,j,M}^{E,h}(x-y,\cdot )\right\|_X\leq C\frac{1+|x|}{|x-y|^{d-1}},\quad (x,y)\in L_2,
$$
and then, by virtue of Lemma \ref{Lem2.5}, the operator $Z^{E,h}_{k,j,M,{\rm loc}}$ is bounded from $L^p(\mathbb{R}^d,\gamma_{\infty ,1})$ into itself, for every $1\leq p\leq \infty$.

\subsection{Proof of Claim 4}  Let $E\subset(0,\infty)$ and $h\geq 0$.  By using Minkowski inequality we obtain
$$
\left\|\mathfrak{s}_{k,j}^{E,h}(x,y,\cdot )\right\|_X\leq C\int_E\Big\|t^{k+1}\partial _t^k\big[te^{-\frac{t^2}{4(u+h}}\big]\Big\|_X|\partial _{x_j}h_u^{I,B_1}(x,y)|\frac{du}{(u+h)^{\frac{3}{2}}},\quad x,y\in \mathbb{R}^d.
$$
It was proved (see \eqref{3.2'}, \eqref{3.2.1} and \eqref{3.2.2}) that
$$
\|t^{k+1}\partial _t^k\big[te^{-\frac{t^2}{4(u+h}}\big]\Big\|_X\leq C(u+h),\quad u>0.
$$
Then,
$$
\left\|\mathfrak{s}_{k,j}^{E,h}(x,y,\cdot )\right\|_X\leq C\int_E|\partial _{x_j}h_u^{I,B_1}(x,y)|\frac{du}{\sqrt{u+h}},\quad x,y\in \mathbb{R}^d.
$$
In a similar way by using \eqref{3.6} we can see that
$$
\left\|\mathfrak{s}_{k,j,M}^{E,h}(x,y,\cdot )\right\|_X\leq C\int_E|\partial _{x_j}h_u^{I,B_1}(x,y)|\frac{du}{\sqrt{u+h}},\quad x,y\in \mathbb{R}^d.
$$

We denote now by $E_1$ the sets $(0,t_0)$ and $J_D^\#$ associated with $\delta_1=1/10$ given in Lemma \ref{Lem2.2}. The arguments developed in the proof of \cite[Proposition 4.7]{MN2} allow us to prove that
$$
\int_E|\partial _{x_j}h_u^{I,B_1}(x,y)|\frac{du}{\sqrt{u+h}}\leq Ce^{|x|^2-|y|^2}\min \big\{(1+|x|)^d,(|x|\sin \theta (x,y))^{-d}\big\},\quad (x,y)\in G_1.
$$
We define
$$
\widetilde{\mathbb{V}}_{k,j,M}^{E,h}(f)(x)=\int_{\mathbb{R}^d}\Big\|\mathfrak{s}_{k,j,M}^{E,h}(x,y,\cdot )\Big\|_Xf(y)dy,\quad x\in \mathbb{R}^d.
$$

From Lemma \ref{Lem2.4} we deduce that the operators $\widetilde{V}_{k,j,{\rm glob}}^{E,h}$ and $\widetilde{\mathbb{V}}_{k,j,{\rm glob}}^{E,h}$ are bounded from $L^1(\mathbb{R}^d,\gamma_{\infty ,1})$ into $L^{1,\infty }(\mathbb{R}^d,\gamma_{\infty ,1})$.

Let now $1<p<\infty$ and $\delta \in (\max\{1-\frac{1}{d},1-\frac{1}{p}\},1)$. Observe that $\delta _p$ in Claim 4 is a particular case of such $\delta$. Consider $E_p$ one of the sets $(0,t_0)$ or $J_D^\#$ given in Lemma \ref{Lem2.2} which is associated to $\delta$. By considering the above estimations it is sufficient to see that the operator $Z_{\rm glob}^{E_p,h}$ is bounded from $L^p(\mathbb{R}^d,\gamma_{\infty ,1})$ into itself, where
$$
Z^{E_p,h}(f)(x)=\int_{\mathbb{R}^d}Z^{E_p,h}(x,y)f(y)dy,\quad x\in \mathbb{R}^d,
$$
being
$$
Z^{E_p,h}(x,y)=\int_{E_p}|\partial_{x_j}h_u^{I,B_1}(x,y)|\frac{du}{\sqrt{u+h}},\quad x,y\in \mathbb{R}^d.
$$

We observe that
\begin{align*}
\Big\|Z^{E_p,h}_{\rm glob}(f)\Big\|_{L^p(\mathbb{R}^d,\gamma_{\infty ,1})}^p&=\pi^{-\frac{d}{2}}\int_{\mathbb R^d}|Z^{E_p,h}_{\rm glob}(f)(x)|^pe^{-|x|^2}dx \\
& =\pi^{-\frac{d}{2}}\int_{\mathbb R^d}\left(\int_{\mathbb R^d} Z^{E_p,h}(x,y)(1-\varphi(x,y))e^{-\frac{1}{p}(|x|^2-|y|^2)}f(y)e^{-\frac{|y|^2}{p}}dy\right)^p dx\\
&=\Big\|\widetilde{Z}^{E_p,h}_{\rm glob}(\pi ^{-\frac{d}{2}}e^{-\frac{|y|^2}{p}}f)\Big\|_{L^p(\mathbb{R}^d,dx)}^p,
\end{align*}
where 
$$
\widetilde{Z}^{E_p,h}(g)(x)=\int_{\mathbb R^d}Z^{E_p,h}(x,y)e^{-\frac{1}{p}(|x|^2-|y|^2)}g(y)dy,\quad x\in\mathbb R^d.
$$
Then, we only have to establish that the operator $\widetilde{Z}_{\rm glob}^{E_p,h}$ is bounded from $L^p(\mathbb R^d, dx)$ into itself. For that, we first prove the following estimations.

\noindent (a) For every $\eta\in (0,1)$, 
\begin{equation}\label{(a)}
Z^{E_p,h}(x,y)\leq Ce^{\eta(1-\delta)|x|^2-\eta |y|^2},\quad (x,y)\in G_1,\;\langle x,y\rangle <0.
\end{equation}

\noindent (b) For every $\eta \in (0,1)$ such that $\eta \delta>1-\frac{1}{d}$,
\begin{equation}\label{(b)}
Z^{E_p,h}(x,y)\leq C|x+y|^de^{\eta (1-\frac{\delta}{2})(|x|^2-|y|^2)-\frac{\eta \delta}{2}|x+y||x-y|},\quad (x,y)\in G_1,\;\langle x,y\rangle \geq 0.
\end{equation}

Let $\eta \in (0,1)$. First we observe that from (\ref{3.5.1}) we deduce that
$$
|\partial_{x_j}h_u^{I,B_1}(x,y)|\leq Ce^{-u}\frac{|e^{-u}x-e^{uR}y|}{(1-e^{-2u})^{\frac{d}{2}+1}}e^{-\frac{|e^{-u}x-e^{uR}y|^2}{1-e^{-2u}}},\quad x,y\in \mathbb{R}^d\mbox{ and } u>0.
$$
Then,
$$
|\partial_{x_j}h_u^{I,B_1}(x,y)| \leq Ce^{-u}\frac{e^{-\eta\frac{|e^{-u}x-e^{uR}y|^2}{1-e^{-2u}}}}{(1-e^{-2u})^{\frac{d+1}{2}}} \leq C\frac{e^{-u}}{\sqrt{1-e^{-2u}}}h_u^{I,B_1}(\sqrt{\eta}x,\sqrt{\eta}y),\quad x,y\in \mathbb{R}^d\mbox{ and }u>0.
$$

By using Lemma \ref{Lem2.2} we have that
$$
|\partial_{x_j}h_{\tau(s)}^{I,B_1}(x,y)|\leq C\frac{1-s}{s^{\frac{d+1}{2}}}e^{\eta(|x|^2-|y|^2-\frac{\delta}{4s}Q_s(x,y))}\quad x,y\in \mathbb{R}^d\mbox{ and }s\in\tau^{-1}(E_p).
$$
Since 
$$
\frac{1}{4s}Q_s(x,y)=\frac{s}{4}|x+y|^2+\frac{1}{4s}|x-y|^2+\frac{|x|^2-|y|^2}{2}=\frac{1}{4s}Q_s(y,x)+|x|^2-|y|^2, \quad x,y\in\mathbb R^d \mbox{ and }s\in(0,1), 
$$
and 
$$
\frac{|e^{-u}x-y|^2}{1-e^{-2u}}=\frac{1}{4s}Q_s(y,x),\quad u=\log\frac{1+s}{1-s},\;x,y\in\mathbb R^d\mbox{ and }s\in(0,1),
$$ 
we get
$$
|x|^2-|y|^2-\frac{\delta}{4s}Q_s(x,y)=(1-\delta)(|x|^2-|y|^2)-\delta\frac{|e^{-u}x-y|^2}{1-e^{-2u}},\quad u=\log\frac{1+s}{1-s}\in (0,\infty),\;x,y\in\mathbb R^d.
$$
Thus we obtain
$$
|\partial_{x_j}h_u^{I,B_1}(x,y)|\leq C\frac{e^{-u}}{(1-e^{-2u})^{\frac{d+1}{2}}}e^{\eta(1-\delta)(|x|^2-|y|^2)}e^{-\eta\delta\frac{|e^{-u}x-y|^2}{1-e^{-2u}}}\quad x,y\in \mathbb{R}^d\;\mbox{and}\;u\in E_p.
$$
By making the change of variables $t=1-e^{-2u}$, $u\in(0,\infty)$, it follows that
\begin{align*}
Z^{E_p,h}(x,y)& \leq Ce^{\eta(1-\delta)(|x|^2-|y|^2)} \int_0^1\frac{e^{-\eta\delta  u(t)}}{t^{\frac{d+1}{2}}\sqrt{|\log(1-t)|}}\frac{dt}{\sqrt{1-t}}\\
&\leq Ce^{\eta(1-\delta)(|x|^2-|y|^2)}\int_0^1\frac{e^{-\eta\delta u(t)}}{t^{ \frac{d}{2}+1}}\frac{dt}{\sqrt{1-t}},\quad x,y\in\mathbb R^d.
\end{align*}
Here $u(t)=|\sqrt{1-t}x-y|^2/t$, $x,y\in\mathbb R^d$ and $t\in(0,1)$.

Then, as in the proof of \cite[Theorem 4.2]{P1} we deduce that \eqref{(a)} is satisfied. 

Assume now that $\langle x,y\rangle \geq0$ and $\eta \delta >1-1/d$. As in \cite{P1} we define $t_0=2\frac{\sqrt{a^2-b^2}}{a+\sqrt{a^2-b^2}}$, being $a=|x|^2+|y|^2$ and $b=2\langle x,y\rangle$, and $u_0=\frac{|y|^2-|x|^2+|x+y||x-y|}{2}$. By using \cite[Lemma 4.1]{P1} we obtain
\begin{align*}
\int_0^1\frac{e^{-\eta\delta u(t)}}{t^{\frac{d}{2}+1}\sqrt{1-t}}dt &\leq C\left(\frac{e^{-u_0}}{t_0^{d/2}}\right)^{1-\frac{1}{d}}\int_0^1\frac{e^{-(\eta\delta -1+\frac{1}{d})u(t)}}{t^{ \frac{3}{2}}\sqrt{1-t}}dt \\
&\leq C\left(\frac{e^{-u_0}}{t_0^{d/2}}\right)^{1-\frac{1}{d}}\frac{e^{-(\eta\delta -1+\frac{1}{d})u_0}}{t_0^{ \frac{1}{2}}} \leq C \frac{e^{-\eta\delta u_0}}{t_0^{d/2}},\quad (x,y)\in G_1.
\end{align*}
Then,
$$
Z^{E_p,h}(x,y)\leq C\frac{e^{\eta(1-\delta)(|x|^2-|y|^2)-\eta\delta u_0}}{t_0^{ d/2}},\quad (x,y)\in G_1\mbox.$$
Since $t_0\sim\frac{\sqrt{a^2-b^2}}{a}=\frac{|x+y||x-y|}{a}$, $|x-y||x+y|\geq 1$, $(x,y)\in G_1$ we obtain
$$
Z^{E_p,h}(x,y)\leq C|x+y|^de^{\eta(1-\frac{\delta}{2})(|x|^2-|y|^2)-\frac{\eta\delta}{2}|x-y||x+y|},\quad (x,y)\in G_1,
$$
and \eqref{(b)} is proved.

We now choose $\eta$ such that 
$$
\max\Big\{\frac{1}{p},\frac{1}{\delta}\big(1-\frac{1}{d}\big)\Big\}<\eta <1.
$$
Note that our hypothesis on $\delta$ leads to $0<\frac{1}{\delta}(1-\frac{1}{d})<1$ and also $\eta(1-\delta)<\frac{1}{p}$.

By using \eqref{(a)} and \eqref{(b)} we can deduce that $\widetilde{Z}^{Ep,h}_{\rm glob}$ is a bounded operator from $L^q(\mathbb{R}^d,dx)$ into itself, for every $1\leq q\leq \infty$. Indeed, from \eqref{(a)} we have that
$$
\int_{\{y\in\mathbb R^d:\langle x,y\rangle <0\}}Z^{E_p,h}(x,y)e^{-\frac{1}{p}(|x|^2-|y|^2)}(1-\varphi(x,y))dy \leq C e^{-(\frac{1}{p}-\eta(1-\delta))|x|^2}\int_{\mathbb R^d}e^{-(\eta-\frac{1}{p})|y|^2}dy,\quad x\in\mathbb R^d.
$$
Since $\frac{1}{p}<\eta<1$ and $\eta(1-\delta)<\frac{1}{p}$, it follows that
$$
\sup_{x\in\mathbb R^d}\int_{\{y\in\mathbb R^d:\langle x,y\rangle <0\}}Z^{E_p,h}(x,y)e^{-\frac{1}{p}(|x|^2-|y|^2)}(1-\varphi(x,y))dy<\infty.$$
On the other hand, from \eqref{(b)} we can write
$$
Z^{E_p,h}(x,y)e^{-\frac{1}{p}(|x|^2-|y|^2)}\leq C |x+y|^de^{-\left(\frac{\eta\delta}{2}-\left|\frac{1}{p}-\eta(1-\frac{\delta}{2})\right|\right)|x+y||x-y|},\quad (x,y)\in G_1\mbox{ and }\langle x,y\rangle\geq 0.
$$
By taking into account that $\frac{1}{p}<\eta<1$ and $\eta(1-\delta )<\frac{1}{ p}$, we get that  $\frac{\eta\delta}{2}-\left|\frac{1}{p}-\eta(1-\frac{\delta}{2})\right|>0$. By proceeding as in \cite[p. 501]{P1} we obtain
$$
\sup_{x\in\mathbb R^d}\int_{\{y\in\mathbb R^d:\langle x,y\rangle \geq 0\}}Z^{E_p,h}(x,y)e^{-\frac{1}{p}(|x|^2-|y|^2)}(1-\varphi(x,y))dy<\infty.
$$

We conclude that
$$
\sup_{x\in\mathbb R^d}\int_{\mathbb R^d}Z^{E_p,h}(x,y)e^{-\frac{1}{p}(|x|^2-|y|^2)}(1-\varphi(x,y))dy<\infty.
$$

In a similar way we get
$$
\sup_{y\in\mathbb R^d}\int_{\mathbb R^d}Z^{E_p,h}(x,y)e^{-\frac{1}{p}(|x|^2-|y|^2)}(1-\varphi(x,y))dx<\infty.
$$
We deduce that the operator  $\widetilde{Z}_{\rm glob}^{E_p,h}$ is bounded from $L^q(\mathbb R^d,dx)$ into itself, for every $1\leq q\leq\infty$. Thus we prove that the operator  $Z_{\rm glob}^{E_p,h}$ is bounded from $L^p(\mathbb R^d,\gamma_{\infty,1})$ into itself, and the proof of the Claim 4 is finished.

\section{Proof of Claim 3}
In this section we prove the Claim 3. Assume that $E \subset (0,\infty )$ and $h\geq 0$.

\subsection{Maximal operators}

\subsubsection{}\label{S4.1.1} We consider firstly the local maximal operator $\widetilde{U}^{E,h}_{*,k,j,{\rm loc}}$ defined by
$$
\widetilde{U}^{E,h}_{*,k,j,{\rm loc}}(f)(x)=\sup_{t>0}\left|\int_{\mathbb{R}^d}f(y)\mathfrak{U}_{k,j}^{E,h}(x-y,t)\varphi (x,y)dy\right|,\quad x\in \mathbb{R}^d,
$$
where
$$
\mathfrak{U}_{k,j}^{E,h}(z,t)=\frac{t^{k+1}}{2\sqrt{\pi}}\int_E \partial _t^k\big[te^{-\frac{t^2}{4(u+h)}}\big]\partial _{x_j}W_u(z)\frac{du}{(u+h)^{\frac{3}{2}}},\quad z\in \mathbb{R}^d\mbox{ and }t>0.
$$
We recall that 
$$
W_u(z)=\frac{e^{-\frac{|z|^2}{2u}}}{(2\pi u)^{\frac{d}{2}}},\quad z\in \mathbb{R}^d\mbox{ and }u>0.
$$
Assume that $f\in C_c^\infty (\mathbb{R}^d)$. We can write, for each $x,y\in \mathbb{R}^d$ and $t>0$,
\begin{align*}
    \int_{\mathbb{R}^d}f(y)\mathfrak{U}_{k,j}^{E,h}(x-y,t)\varphi (x,y)dy&=\frac{t^{k+1}}{2\sqrt{\pi}}\int_E \partial _t^k\big[te^{-\frac{t^2}{4(u+h)}}\big]\int_{\mathbb{R}^d}\partial _{x_j}W_u(x-y)f(y)\varphi (x,y)\frac{dydu}{(u+h)^{\frac{3}{2}}}\\
    &=-\frac{t^{k+1}}{2\sqrt{\pi}}\int_E \partial _t^k\big[te^{-\frac{t^2}{4(u+h)}}\big]\int_{\mathbb{R}^d}\frac{x_j-y_j}{u}W_u(x-y)f(y)\varphi (x,y)\frac{dydu}{(u+h)^{\frac{3}{2}}}.
\end{align*}
Then, by using Minkowski inequality and \eqref{3.2'} we get that
\begin{align*}
\widetilde{U}_{*,k,j,{\rm loc}}^{E,h}(f)(x)&\leq C\int_E\|t^{k+1}\partial _t^k[te^{-\frac{t^2}{4(u+h)}}]\|_{L^\infty ((0,\infty ),dt)}
\int_{\mathbb{R}^d}|x-y|W_u(x-y)|f(y)|\varphi (x,y)\frac{dydu}{u(u+h)^{\frac{3}{2}}}\\
&\leq C\int_E \frac{1}{u\sqrt{u+h}}\int_{\mathbb{R}^d}|x-y|W_u(x-y)|f(y)|\varphi (x,y)dydu,\quad x\in \mathbb{R}^d.
\end{align*}

If $h>0$, by taking into account that $|x-y|\leq C$, when $(x,y)\in L_2$, and that $\frac{|z|}{\sqrt{u}}W_u(z)\leq CW_{2u}(z)$, $z\in \mathbb{R}^d$, it follows that
\begin{align*}
  \widetilde{U}_{*,k,j,{\rm loc}}^{E,h}(f)(x) &\leq C\left(
    \int_1^\infty\frac{1}{u\sqrt{u+h}}\int_{\mathbb{R}^d}W_u(x-y)|f(y)|dydu\right.\\
    &\left.\quad +\int_0^1\frac{1}{\sqrt{u(u+h)}}\int_{\mathbb{R}^d}W_{2u}(x-y)|f(y)|dydu\right)\\
    &\leq C\left(\int_1^\infty u^{-\frac{3}{2}}du+\int_0^1u^{-\frac{1}{2}}du\right)\sup_{v>0}W_v(|f|)(x),\quad x\in \mathbb{R}^d.
\end{align*}
On the other hand, we have that, for every $x\in\mathbb{R}^d$ and $t>0$, 
$$
 \widetilde{U}_{*,k,j,{\rm loc}}^{E,0}(f)(x)\leq C\sup_{t>0}\left(t^{k+1}\int_0^\infty \frac{e^{-\frac{t^2}{8u}}}{u^{\frac{k+3}{2}}}du\right)\sup_{v>0}W_v(|f|)(x)\leq C\sup_{v>0}W_v(|f|)(x).
$$
We conclude that
$$
\widetilde{U}_{*,k,j,{\rm loc}}^{E ,h}(f)(x)\leq C\sup_{t>0}W_t(|f|)(x),\quad x\in \mathbb{R}^n.
$$
From wellknown $L^p$-boundedness properties of the maximal operator defined by the classical heat semigroup we deduce that the local maximal operator $\widetilde{U}_{*,k,j,{\rm loc}}^{E ,h}$ is bounded from $L^p(\mathbb{R}^d,dx)$ into itself, for every $1<p<\infty$ and from $L^1(\mathbb{R}^d, dx)$ into $L^{1,\infty }(\mathbb{R}^d,dx)$. 

\subsubsection{}\label{S4.1.2} We now study the local maximal operator
$$
\widetilde{\mathbb{U}}_{*,k,j,M,{\rm loc}}^{E ,h}(f)(x)=\sup_{t>0}\left|\int_{\mathbb{R}^d}\mathfrak{U}_{k,j,M}^{E ,h}(x-y,t)f(y)\varphi (x,y)dy\right|,\quad x\in \mathbb{R}^d,
$$
where 
$$
\mathfrak{U}_{k,j,M}^{E ,h}(z,t)=\frac{t^{k+\frac{1}{2}}}{\Gamma (M)}\int_E \partial _t^k\big[t^{-M}e^{-\frac{u+h}{t}}\big]\partial _{x_j}W_u(z)(u+h)^{M-1}du,\quad z\in \mathbb{R}^d\mbox{ and }t>0.
$$
According to \eqref{3.6} and Minkowski inequality we can write
\begin{align*}
\widetilde{\mathbb{U}}_{*,k,j,M, {\rm loc}}^{E,h}(f)(x)&\leq C\int_E\|t^{k+\frac{1}{2}}\partial _t^k[t^{-M}e^{-\frac{u+h}{t}}]\|_{L^\infty ((0,\infty ),dt)}\frac{|x-y|}{u}W_u(x-y)|f(y)|\varphi (x,y)(u+h)^{M-1}dydu\\
&\leq C\int_0^\infty  \frac{1}{u\sqrt{u+h}}\int_{\mathbb{R}^d}|x-y|W_u(x-y)|f(y)|\varphi (x,y)dydu,\quad x\in \mathbb{R}^d.
\end{align*}
By proceeding as in section \ref{S4.1.1} we get that, if $h>0$,
$$
\widetilde{\mathbb{U}}_{*,k,j,M, {\rm loc}}^{E,h}(f)(x)\leq C\sup_{v>0}W_v(|f|)(x),\quad x\in \mathbb{R}^d.
$$

On the other hand, by using \eqref{3.6.1}, if $k\geq 1$ we get
$$
t^{k+\frac{1}{2}}\int_0^\infty \Big|\partial_t^k\big[t^{-M}e^{-\frac{u}{t}}\big]\Big|u^{M-\frac{3}{2}}du
\leq Ct^{k+\frac{1}{2}}\sum_{i=0}^{k-1}\sum_{\ell =0}^{k-i}t^{-2k-M+\ell +i}\int_0^\infty e^{-\frac{u}{t}}u^{k-i-\ell +M-\frac{3}{2}}du\leq C,\quad t>0.
$$
Furthermore, we obtain
\begin{equation}\label{4.3}
t^{\frac{1}{2}-M}\int_0^\infty e^{-\frac{u}{t}}u^{M-\frac{3}{2}}du\leq C,\quad t>0.
\end{equation}
Then, 
\begin{align*}
\widetilde{\mathbb{U}}_{*,k,j,M, {\rm loc}}^{E,0}(f)(x)&\leq Ct^{k+\frac{1}{2}}\int_0^\infty \partial_t^k\big[t^{-M}e^{-\frac{u}{t}}\big]\Big|u^{M-\frac{3}{2}}\int_{\mathbb{R}^d}\frac{|x-y|}{\sqrt{u}}W_u(x-y)|f(y)|dydu\\
&\leq C\sup_{v>0}W_v(|f|)(x),\quad x\in \mathbb{R}^d.
\end{align*}
We conclude that
$$
\widetilde{\mathbb{U}}_{*,k,j,M,{\rm loc}}^{E ,h}(f)(x)\leq C\sup_{t>0}W_t(|f|)(x),\quad x\in \mathbb{R}^d,
$$
and thus, we establish that $\widetilde{\mathbb{U}}_{*,k,j,M,{\rm loc}}^{E ,h}$ is bounded from $L^p(\mathbb{R}^d,dx)$ into itself, for every $1<p<\infty$, and from $L^1(\mathbb{R}^d,dx)$ into $L^{1,\infty }(\mathbb{R}^d,dx)$.
\vspace{1.5cm}

\subsection{Littlewood-Paley functions}

\subsubsection{}\label{S4.2.1} We consider the local Littlewood-Paley function $\mathfrak{g}_{k,j,{\rm loc}}^{E, h}$ defined by
$$
\mathfrak{g}_{k,j,{\rm loc}}^{E, h}(f)(x)=\left(\int_0^\infty \left|\int_{\mathbb{R}^d}f(y)\mathfrak{U}_{k,j}^{E ,h}(x-y,t)\varphi (x,y)dy\right|^2\frac{dt}{t}\right)^{\frac{1}{2}},\quad x\in \mathbb{R}^d.
$$
By using Minkowski inequality and \eqref{3.2.1} we get
\begin{align*}
    \mathfrak{g}_{k,j, {\rm loc}}^{E, h}(f)(x)&\leq C\int_E \Big\|t^{k+1}\partial _t^k\big[te^{-\frac{t^2}{4(u+h)}}\big]\Big\|_{L^2((0,\infty),\frac{dt}{t})}\int_{\mathbb{R}^d}|x_j-y_j|W_u(x-y)|f(y)|\varphi (x,y)\frac{dydu}{u(u+h)^{\frac{3}{2}}}\\
    &\leq C \int_0^\infty  \frac{1}{u\sqrt{u+h}}\int_{\mathbb{R}^d}|x-y|W_u(x-y)|f(y)|\varphi (x,y)dydu\leq C\sup_{v>0}W_v(|f|)(x),\quad x\in \mathbb{R}^d.
\end{align*}
As in section \ref{S4.1.1} we deduce that if $h>0$ the operator $\mathfrak{g}_{k,j,{\rm loc}}^{E, h}$ is bounded from $L^p(\mathbb{R}^d,dx)$ into itself, for every $1<p<\infty$, and from $L^1(\mathbb{R}^d,dx)$ into $L^{1,\infty }(\mathbb{R}^d,dx)$.

We now study the operator $\mathfrak{g}_{k,j,{\rm loc}}^{E, 0}$. We consider the Littlewood-Paley function $\mathfrak{g}_{k,j}^{E, 0}$ defined by
$$
\mathfrak{g}_{k,j}^{E, 0}(f)(x)=\left(\int_0^\infty \left|\int_{\mathbb{R}^d}f(y)\mathfrak{U}_{k,j}^{E ,0}(x-y,t)dy\right|^2\frac{dt}{t}\right)^{\frac{1}{2}},\quad x\in \mathbb{R}^d.
$$
We are going to see that $\mathfrak{g}_{k,j}^{E, 0}$ is bounded from $L^2(\mathbb{R}^d,dx)$ into itself. If $\mathcal{F}$ denotes the Fourier transform defined in $L^1(\mathbb{R}^d)$ by
$$
\mathcal{F}(f)(z)=\int_{\mathbb{R}^d}g(x)e^{-i\langle x,z\rangle}dx,\quad z\in \mathbb{R}^d,
$$
we have that (see for instance, \cite[p. 15 (11)]{EMOT1})
\begin{equation}\label{4.2'}
\mathcal{F}(W_t)(x)= e^{-\frac{t|x|^2}{2}},\quad x\in \mathbb{R}^d\mbox{ and }t>0.
\end{equation}
By using Plancherel equality and \eqref{4.2'} we get
\begin{align*}
    \int_{\mathbb{R}^d}|\mathfrak{g}_{k,j}^{E ,0}(f)(x)|^2dx&=\int_0^\infty \int_{\mathbb{R}^d}\left|\int_{\mathbb{R}^d}f(y)\mathfrak{U}_{k,j}^{E, 0}(x-y,t)dy\right|^2dx\frac{dt}{t}\\
    &  =\int_0^\infty \int_{\mathbb{R}^d}\left|\mathcal F\left(\int_{\mathbb{R}^d}f(y)\mathfrak{U}_{k,j}^{E, 0}(\cdot-y,t)dy\right)(z)\right|^2dz\frac{dt}{t}\\
    & =\int_0^\infty \int_{\mathbb{R}^d}\Big|\int_E\frac{t^{k+1}}{2\sqrt{\pi}} \frac{\partial _t^k\big[te^{-\frac{t^2}{4u}}\big]}{u^{\frac{3}{2}}}\int_{\mathbb R^d} f(y)\mathcal{F}(\partial _{x_j}W_u(\cdot -y))(z)dydu\Big|^2dz\frac{dt}{t}\\
    &=\int_0^\infty \int_{\mathbb{R}^d}\Big|\mathcal{F}(f)(z)\frac{t^{k+1}}{2\sqrt{\pi}}\int_E \frac{\partial _t^k\big[te^{-\frac{t^2}{4u}}\big]}{u^{\frac{3}{2}}}\mathcal{F}(\partial _{x_j}W_u)(z)du\Big|^2dz\frac{dt}{t}\\
    &= \int_{\mathbb{R}^d}|\mathcal{F}(f)(z)|^2\int_0^\infty \Big|\frac{t^{k+1}}{2\sqrt{\pi}}\int_E \frac{\partial _t^k\big[te^{-\frac{t^2}{4u}}\big]}{u^{\frac{3}{2}}}z_je^{-\frac{u|z|^2}{2}}du\Big|^2\frac{dt}{t}dz,\quad f\in C_c^\infty (\mathbb{R}^d).
\end{align*}
Minkowski inequality and \eqref{3.2.1} leads to 
\begin{align*}
    \int_{\mathbb{R}^d}|\mathfrak{g}_{k,j}^{E ,0}(f)(x)|^2dx&\leq C\int_{\mathbb{R}^d}|\mathcal{F}(f)(z)|^2\left(\int_0^\infty |z|e^{-\frac{u|z|^2}{2}}\big\|t^{k+1}\partial _t^k\big[te^{-\frac{t^2}{4u}}\big]\big\|_{L^2(0,\infty),\frac{dt}{t})}\frac{du}{u^{\frac{3}{2}}}\right)^2dz\\
    &\leq C\int_{\mathbb{R}^d}|\mathcal{F}(f)(z)|^2\left(\int_0^\infty \frac{|z|}{\sqrt{u}}e^{-\frac{u|z|^2}{2}}du\right)^2dz\\
    &\leq C\int_{\mathbb{R}^d}|\mathcal{F}(f)(z)|^2dz=C\|f\|_{L^2(\mathbb{R}^d,dx)}^2,\quad f\in C_c^\infty (\mathbb{R}^d).
\end{align*}
We now use the Banach-valued Calder\'on-Zygmund theory for singular integrals (see \cite{RbFRT}). We recall that the operator $U_{k,j}^{E ,0}$ is defined by
$$
U_{k,j}^{E ,0}(f)(x,t)=\int_{\mathbb{R}^d}\mathfrak{U}_{k,j}^{E, 0}(x-y,t)f(y)dy,\quad x\in \mathbb{R}^d\mbox{ and }t>0.
$$
It is clear that
$$
\mathfrak{g}_{k,j}^{E ,0}(f)(x)=\left\|U_{k,j}^{E, 0}(f)(x,\cdot )\right\|_{L^2((0,\infty ),\frac{dt}{t})},\quad x\in \mathbb{R}^d.
$$
 By using again Minkowski inequality and \eqref{3.2.1} 
 \begin{align}\label{4.3'}
 \left\|\mathfrak{U}_{k,j}^{E, 0}(x-y,\cdot)\right\|_{L^2((0,\infty ),\frac{dt}{t})}&\leq C\int_0^\infty \Big\|t^{k+1}\partial_t^k[te^{-\frac{t^2}{4u}]}\Big\|_{L^2((0,\infty ),\frac{dt}{t})}\frac{|x-y|}{u}\frac{e^{-\frac{|x-y|^2}{2u}}}{u^{\frac{d+3}{2}}}du\nonumber\\
 &\leq C\int_0^\infty \frac{e^{-c\frac{|x-y|^2}{u}}}{u^{\frac{d}{2}+1}}du\leq \frac{C}{|x-y|^d},\quad x,y\in \mathbb{R}^d,\;x\not=y.
 \end{align}
 In a similar way we obtain, for every $i=1,...,d$,
 \begin{equation}\label{4.4}
 \left\|\partial _{x_i}\mathfrak{U}_{k,j}^{E, 0}(x-y,\cdot)\right\|_{L^2((0,\infty ),\frac{dt}{t})}
 + \left\|\partial _{y_i}\mathfrak{U}_{k,j}^{E, 0}(x-y,\cdot)\right\|_{L^2((0,\infty ),\frac{dt}{t})}\leq \frac{C}{|x-y|^{d+1}},\quad x,y\in \mathbb{R}^d,\;x\not=y.
 \end{equation}
 
 Let $N\in \mathbb{N}$, $N\geq 2$. The space $\mathbb{F}_N=L^2((1/N,N),dt/t)$ is a Banach and separable space. 
 
 Assume that $f\in C_c^\infty (\mathbb{R}^d)$. Let $x\in \mathbb{R}^d$. We consider the mapping $F_x:\mathbb{R}^d\longrightarrow \mathbb{F}_N$ defined, for every $y\in \mathbb{R}^d$, by $F_x(y):[\frac{1}{N},N]\longrightarrow \mathbb{C}$ such that 
 $$
 [F_x(y)](t)=f(y) \mathfrak{U}_{k,j}^{E , 0}(x-y,t),\quad t\in \Big[\frac{1}{N},N\Big].
 $$
 We observe that $F_x(y)$, $y\in \mathbb{R}^d$, is continuous in $[1/N,N]$. Thus, $F_x$ is continuous in $\mathbb{R}^d$. Indeed, let $y_0\in \mathbb{R}^d$. We can write, by \eqref{3.2} and Minkowski inequality
 \begin{align*}
     \|F_x(y)-F_x(y_0)\|_{\mathbb{F}_N}&\leq C\Big\|\int_0^\infty \frac{t^{k+1}e^{-\frac{t^2}{8u}}}{u^{(k+2)/2}}|f(y)\partial _{x_j}W_u(x-y)-f(y_0)\partial _{x_j}W_u(x-y_0)|du\Big\|_{\mathbb{F}_N}\\
     &\leq C\Big\|\int_0^\infty te^{-v}v^{k/2-1}|f(y)\partial _{x_j}W_{\frac{t^2}{8v}}(x-y)-f(y_0)\partial _{x_j}W_{\frac{t^2}{8v}}(x-y_0)|dv\Big\|_{\mathbb{F}_N}\\
     &\leq C\int_0^\infty e^{-v}v^{k/2-1}\big\|t(f(y)\partial _{x_j}W_{\frac{t^2}{8v}}(x-y)-f(y_0)\partial _{x_j}W_{\frac{t^2}{8v}}(x-y_0))\big\|_{\mathbb{F}_N}dv,\quad y\in \mathbb{R}^d.
 \end{align*}
 Since
\begin{align*}
 \big\|t(f(y)\partial _{x_j}W_{\frac{t^2}{8v}}(x-y)-f(y_0)\partial _{x_j}W_{\frac{t^2}{8v}}(x-y_0))\big\|_{\mathbb{F}_N}&\\
 &\hspace{-5cm}= (2\pi)^{-d/2}\left(\int_{\frac{1}{N}}^N t\Big(|f(y)(x_j-y_j)e^{-4\frac{|x-y|^2v}{t^2}}-f(y_0)(x_j-(y_0)_j)e^{-4\frac{|x-y_0|^2v}{t^2}}|\Big(\frac{8v}{t^2}\Big)^{{\frac{d}{2}}+1}\Big)^2dt\right)^{\frac{1}{2}}\\
 &\hspace{-5cm}\leq C\left(\int_{\frac{1}{N}}^N\frac{v^{d+2}}{t^{2d+3}}\big(|x-y|^2e^{-8\frac{|x-y|^2v}{t^2}}+|x-y_0|^2e^{-8\frac{|x-y_0|^2v}{t^2}}\big)dt\right)^2\\
 &\hspace{-5cm}\leq Cv^{\frac{d+1}{2}}\int_{\frac{1}{N}}^N\frac{dt}{t^{2d+1}}\leq Cv^{\frac{d+1}{2}},\quad y\in \mathbb{R}^d\mbox{ and }v>0,
 \end{align*}
 by using the dominated convergence theorem we deduce that
 $$
 \lim_{y\rightarrow y_0}\|F_x(y)-F_x(y_0)\|_{\mathbb{F}_N}=0.
 $$
 Since $\mathbb{F}_N$ is a separable Banach space, Pettis' Theorem (\cite[Theorem p. 131]{Yo}) implies that $F_x$ is $\mathbb{F}_N$ -strongly measurable.
 
 By \eqref{4.3'} we get
 $$
 \int_{\mathbb{R}^d}\|\mathfrak{U}_{k,j}^{E, 0}(x-y,\cdot )\|_{\mathbb{F}_N}|f(y)|dy<\infty ,\quad x\not \in {\rm supp }f.
 $$
 We define 
 $$
\widetilde{U}_{k,j}^{E ,0}(f)(x)=\int_{\mathbb{R}^d}\mathfrak{U}_{k,j}^{E ,0}(x-y,\cdot )f(y)dy,\quad x\not\in {\rm supp }f,
 $$
 where the integral is understood in the $\mathbb{F}_N$-Bochner sense. Suppose that $g\in \mathbb{F}_N$. We have that
 $$
 \int_{\frac{1}{N}}^N\int_{\mathbb{R}^d}|\mathfrak{U}_{k,j}^{E,0}(x-y,t)f(y)g(t)|dydt\leq C\|g\|_{\mathbb{F}_N}\int_{\mathbb{R}^d}\frac{|f(y)|}{|x-y|^d}dy<\infty,\quad x\not \in {\rm supp }f.
 $$
 We can write
 $$
 \int_{\frac{1}{N}}^Ng(t)\Big[\widetilde{U}_{k,j}^{E ,0}(f)(x)\Big](t)\frac{dt}{t}=\int_{\mathbb{R}^d}f(y)\int_{\frac{1}{N}}^N \mathfrak{U}_{k,j}^{E ,0}(x-y,t)g(t)\frac{dt}{t}dy=\int_{\frac{1}{N}}^Ng(t)U_{k,j}^{E,0}(f)(x,t)\frac{dt}{t},\quad x\not\in {\rm supp }f.
 $$
 Hence, for every $x\not\in {\rm supp }f$, $\widetilde{U}_{k,j}^{E,0}(f)(x)=U_{k,j}^{E, 0}(f)(x,\cdot)$, in $\mathbb{F}_N$.
 
 We also have that
 \begin{equation}\label{4.5}
 \int_{\mathbb{R}^d}\Big\|\widetilde{U}_{k,j}^{E,0}(f)(x)\Big\|_{\mathbb{F}_N}^2dx\leq \int_{\mathbb{R}^d}|\mathfrak{g}_{k,j}^{E, 0}(f)(x)|^2dx\leq C\|f\|_{L^2(\mathbb{R}^d,dx)}^2.
 \end{equation}
 
 According to \eqref{4.3'}, \eqref{4.4} and \eqref{4.5} and by taking into account that the constant $C$ in \eqref{4.5} does not depend on $N$, the vector-valued Calder\'on-Zygmund theory (\cite{RbFRT}) allows us to conclude that $\widetilde{U}_{k,j}^{E ,0}$ defines a bounded operator from 
 
 (i) $L^p(\mathbb{R}^d,dx)$ into $L_{\mathbb{F}_N}^p(\mathbb{R}^d,dx)$ and  
 $$
 \sup_{\substack{N\in \mathbb{N}\\N\geq 2}}\Big\|\widetilde{U}_{k,j}^{E, 0}\Big\|_{L^p(\mathbb{R}^d,dx)\longrightarrow L^p_{\mathbb{F}_N}(\mathbb{R}^d,dx)}<\infty,
 $$
 for every $1<p<\infty$;
 
 (ii) $L^1(\mathbb{R}^d,dx)$ into $L_{\mathbb{F}_N}^{1,\infty }(\mathbb{R}^d,dx)$ and 
 $$
 \sup_{\substack{N\in \mathbb{N}\\N\geq 2}}\Big\|\widetilde{U}_{k,j}^{E, 0}\Big\|_{L^1(\mathbb{R}^d,dx)\longrightarrow L_{\mathbb{F}_N}^{1,\infty }(\mathbb{R}^d,dx)}<\infty.
 $$
By using monotone convergence theorem we deduce that $\mathfrak{g}_{k,j}^{E, 0}$ is bounded from $L^p(\mathbb{R}^d,dx)$ into itself, for every $1<p<\infty$, and from $L^1(\mathbb{R}^d,dx)$ into $L^{1,\infty}(\mathbb{R}^d, dx)$.

\subsubsection{}\label{S4.2.2} We are going to study the local Littlewood-Paley function $\mathbb{G}_{k,j,M,{\rm loc}}^{E, h}$ defined by
$$
\mathbb{G}_{k,j,M,{\rm loc}}^{E, h}(f)(x)=\left(\int_0^\infty \Big|\int_{\mathbb{R}^d}\mathfrak{U}_{k,j,M}^{E, h}(x-y,t)f(y)\varphi (x,y)dy\Big|^2\frac{dt}{t}\right)^{\frac{1}{2}},\quad x\in \mathbb{R}^d.
$$
Suppose that $h>0$. Minkowski inequality and \eqref{3.6} lead to 
\begin{align*}
    \mathbb{G}_{k,j,M,{\rm loc}}^{E, h}(f)(x)&\leq C\int_{E}\Big\|t^{k+\frac{1}{2}}\partial _t^k\big[t^{-M}e^{-\frac{u+h}{t}}\big]\Big\|_{L^2((0,\infty ),\frac{dt}{t})}\frac{(u+h)^{M-1}}{u}\int_{\mathbb{R}^d}|x_j-y_j|W_u(x-y)|f(y)|\varphi (x,y)dydu\\
    &\leq C\int_E\frac{1}{u\sqrt{u+h}}\int_{\mathbb{R}^d}|x_j-y_j|W_u(x-y)|f(y)|\varphi (x,y)dydu\leq C\sup_{v>0}|W_v(|f|)(x)|,\quad x\in \mathbb{R}^d.
\end{align*}
It follows that $\mathbb{G}_{k,j,M,{\rm loc}}^{E, h}$ is bounded from $L^p(\mathbb{R}^d,dx)$ into itself, for every $1<p<\infty$, and from $L^1(\mathbb{R}^d,dx)$ into $L^{1,\infty}(\mathbb{R}^d, dx)$. 

In order to study the operator $\mathbb{G}_{k,j,M,{\rm loc}}^{E, 0}$ we use the vector-valued Calder\'on-Zygmund theory (\cite{RbFRT}).

We define 
$$
\mathbb{G}_{k,j,M}^{E, 0}(f)(x)=\left(\int_0^\infty \left|\int_{\mathbb{R}^d}\mathfrak{U}_{k,j,M}^{E ,0}(x-y,t)f(y)dy\right|^2\frac{dt}{t}\right)^{\frac{1}{2}},\quad x\in \mathbb{R}^d.
$$
By using Plancherel equality, Minkowski inequality and \eqref{3.6} we obtain
\begin{align*}
    \int_{\mathbb{R}^d}|\mathbb{G}_{k,j,M}^{E, 0}(f)(x)|^2dx&=\int_0^\infty \int_{\mathbb{R}^d}\left|\int_{\mathbb{R}^d}\mathfrak{U}_{k,j,M}^{E ,0}(x-y,t)f(y)dy\right|^2dx\frac{dt}{t}\\
    &=\int_0^\infty \int_{\mathbb{R}^d}\left|\mathcal{F}(f)(z)\frac{t^{k+\frac{1}{2}}}{\Gamma (M)}\int_E \partial _t^k\big[t^{-M}e^{-\frac{u}{t}}\big]\mathcal{F}(\partial_{x_j}W_u)(z)u^{M-1}dy\right|^2dz\frac{dt}{t}\\
    &\leq C\int_{\mathbb{R}^d}|\mathcal{F}(f)(z)|^2\left(\int_0^\infty |z|e^{-u|z|^2}\frac{du}{\sqrt{u}}\right)^2dz\leq C\|f\|_{L^2(\mathbb{R}^d,dx)}^2.
\end{align*}
We consider the operator $\mathbb{U}_{k,j,M}^{E, 0}$ defined by
$$
\mathbb{U}_{k,j,M}^{E,0}(f)(x,t)=\int_{\mathbb{R}^d}\mathfrak{U}_{k,j,M}^{E, 0}(x-y,t)f(y)dy,\quad x\in \mathbb{R}^d\mbox{ and }t>0.
$$
We have that
$$
\mathbb{G}_{k,j,M}^{E, 0}(f)(x)=\left\|\mathbb{U}_{k,j,M}^{E, 0}(f)(x,\cdot )\right\|_{L^2((0,\infty ),\frac{dt}{t})},\quad x\in \mathbb{R}^d.
$$
Minkowski inequality and \eqref{3.6} lead to
$$
\Big\|\mathfrak{U}_{k,j,M}^{E ,0}(x-y,\cdot)\Big\|_{L^2((0,\infty),\frac{dt}{t})}\leq C\int_0^\infty \frac{e^{-\frac{|x-y|^2}{u}}}{u^{\frac{d}{2}+1}}du\leq \frac{C}{|x-y|^d},\quad x,y\in \mathbb{R}^d,\;x\not=y.
$$
We also obtain, for every $i=1,..,d$,
$$
\Big\|\partial_{x_i}\mathfrak{U}_{k,j,M}^{E ,0}(x-y,\cdot)\Big\|_{L^2((0,\infty ),\frac{dt}{t})}+\Big\|\partial_{y_i}\mathfrak{U}_{k,j,M}^{E ,0}(x-y,\cdot)\Big\|_{L^2((0,\infty ),\frac{dt}{t})}\leq \frac{C}{|x-y|^{d+1}},\quad x,y\in \mathbb{R}^d,\;x\not=y.
$$
By proceeding as in the previous case we can conclude that the operator $\mathbb{G}_{k,j,M,{\rm loc}}^{E, 0}$ is bounded from $L^p(\mathbb{R}^d,dx)$ into itself, for every $1<p<\infty$, and from $L^1(\mathbb{R}^d,dx)$ into $L^{1,\infty }(\mathbb{R}^d,dx)$.

\subsection{Variation operators}
\subsubsection{}\label{S4.3.1}
We study the local variation operator defined by
\begin{equation*}
    \mathfrak{v}^{E,h}_{\rho,k,j,\text{loc}}(f)(x) = 
    V_\rho\left(
    t\rightarrow U^{E, h}_{k,j,\text{loc}} (f)(x,t)
    \right),\quad x\in \mathbb{R}^d,
\end{equation*}
where
\begin{equation*}
    U^{E,h}_{k,j}(f)(x,t) 
    = \int_{\mathbb{R}^d} \mathfrak{U}^{E,h}_{k,j} (x-y,t) f(y) dy, \quad x\in\mathbb{R}^d \text{ and } t>0.
\end{equation*}

By using~\eqref{3.2.2} we can proceed as in section \ref{S4.1.1} to prove that, when $h>0$,  $\mathfrak{v}^{E,h}_{\rho,k,j,\text{loc}}$ is bounded from $L^p(\mathbb{R}^d, dx)$ into itself, for every, $1<p<\infty$, and from $L^1(\mathbb{R}^d, dx)$ into $L^{1,\infty}(\mathbb{R}^d, dx)$.

We now study the variation operator $\mathfrak{v}^{E,0}_{\rho,k,j,\text{loc}}$. We consider first, $E = (0,\infty)$. The classical Poisson kernel is given by
\begin{equation*}
    \mathbb{P}_t(z) = \frac{\Gamma(\frac{d+1}{2})}{\pi^{\frac{d+1}{2}}} \frac{t}{(t^2+ |z|^2)^{\frac{d+1}{2}}}, \quad z\in\mathbb{R}^d \text{ and } t>0.
\end{equation*}

We have that 
\begin{equation*}
    \partial^{k+1}_t \left[\mathbb{P}_t(z) \right]
    = \partial^{k+2}_t \left[
    \frac{\Gamma(\frac{d+1}{2})}{\pi^{\frac{d+1}{2}}(1-d)} \frac{1}{(t^2 + |z|^2)^{\frac{d-1}{2}}}
    \right], \quad z\in\mathbb{R}^d \text{ and } t>0.
\end{equation*}
According to Fa\`a di Bruno's formula
\begin{equation*}
    \partial^{k+2}_t \left[\frac{1}{(t^2 + |z|^2)^{\frac{d-1}{2}}}
    \right] = 
    \sum_{0\leq \ell\leq \frac{k+2}{2}} a_\ell t^{k+2-2\ell}
    \frac{1}{(t^2 + |z|^2)^{\frac{d-1}{2}+k+2-\ell}}, \quad z\in\mathbb{R}^d \text{ and } t>0,
\end{equation*}
for certain $a_\ell\in\mathbb{R}$, $0\leq \ell\leq \frac{k+1}{2}$, $\ell\in\mathbb{N}$. Then,
\begin{equation*}
    t^{k+1}\partial^{k+1}_t \mathbb{P}_t(z) = \phi_t(z), \quad  z\in\mathbb{R}^d \text{ and } t>0,
\end{equation*}
where $\phi_t(z) = t^{-d}\phi(z/t)$, $z\in\mathbb{R}^d$ and $t>0$, and
\begin{equation*}
    \phi(z) = \sum_{0\leq \ell\leq \frac{k+2}{2}}  
    \frac{a_\ell}{(1 + |z|^2)^{\frac{d-1}{2}+k+2-\ell}}, \quad  z\in\mathbb{R}^d.
\end{equation*}

On the other hand, given that $\mathbb{P}_t$ is a radial function, the relation between the Fourier and the Hankel Transform, leads to (see \cite[p. 7 (4) and p. 24 (18)]{EMOT2}),
\begin{equation*}
    \mathcal{F}(\partial_{x_j}\partial^k_t \mathbb{P}_t)(z) = 
    i z_j |z|^k e^{-t|z|} = i \frac{z_j}{|z|}|z|^{k+1}e^{-t|z|},
    \quad z\in\mathbb{R}^d\setminus\{0\} \text{ and } t>0.
\end{equation*}

We get
\begin{equation*}
    t^{k+1}\partial_{x_j}\partial^k_t \mathbb{P}_t (f)(x)
    = C t^{k+1} \partial^{k+1}_t \mathbb{P}_t (R_jf)(x)
    = C(\phi_t * R_j f)(x), \quad  x\in\mathbb{R}^d \text{ and } t>0,
\end{equation*}
for a certain $C\in\mathbb{R}$. Here, ${R}_j$ denotes the $j$-th Euclidean Riesz transform. 

We define 
$$\psi(u) = \sum_{0\leq \ell \leq \frac{k+2}{2}} 
\frac{a_\ell}{(1+u^2)^{\frac{d+1}{2}+k+2-\ell}},\quad u\in(0,\infty).
$$  
It is clear that $\phi(z) = \psi(|z|)$, $z\in \mathbb{R}^d$. We have that $\psi(u)\rightarrow 0$ as $u\rightarrow \infty$, and $\int_0^\infty |\psi'(u)|u^d du <\infty$. According to Lemma 2.4 in~\cite{CJRW}, the variation operator associated with $\{T_t\}_{t>0}$ where $T_t f = \phi_t * f$, $t>0$, is bounded from $L^p(\mathbb{R}^d,dx)$ into itself, for every $1<p<\infty$, and from $L^1(\mathbb{R}^d, dx)$ into $L^{1,\infty}(\mathbb{R}^d,dx)$. Since $R_j$ is bounded from $L^p(\mathbb{R}^d,dx)$ into itself, for every $1<p<\infty$ we derive the same boundedness property for $\mathfrak{v}^{(0,\infty), 0}_{\rho,k,j}$.

In order to see that $\mathfrak{v}^{(0,\infty), 0}_{\rho,k,j}$ is bounded from $L^1(\mathbb{R}^d, dx)$ into $L^{1,\infty}(\mathbb{R}^d,dx)$, we use vector valued Calder\'on-Zygmund theory. 

According to~\eqref{3.2.2} and Minkoswki inequality we obtain
\begin{equation}\label{4.1}
    \begin{split}
        \Big\|\mathfrak{U}^{(0,\infty),0}_{k,j}(x-y,\cdot)\Big\|_{V_\rho} & \leq C
        \int_0^\infty \frac{\|t^{k+1}\partial^k_t [te^{\frac{-t^2}{4u}}]\|_{V_\rho}}{u^{\frac{3}{2}}} \frac{|x_j - y_j|}{u^{\frac{d}{2} +1}}
        e^{\frac{-|x-y|^2}{4u}} du
        \\ & \leq C
        \int_0^\infty \frac{e^{\frac{-c|x-y|^2}{u}}}{u^{\frac{d}{2} +1}} du \leq \frac{C}{|x-y|^d},\quad x, y\in \mathbb{R}^d,\;x\neq y,
            \end{split}
\end{equation}
and, in a similar way, for every $i=1,...,d$,
\begin{equation}\label{4.2}
\Big\|\partial_{x_i}\mathfrak{U}^{(0,\infty),0}_{k,j}(x-y,\cdot)\Big\|_{V_\rho}+\Big\|\partial_{y_i}\mathfrak{U}^{(0,\infty),0}_{k,j}(x-y,\cdot)\Big\|_{V_\rho} \leq \frac{C}{|x-y|^{d+1}},\quad x, y\in \mathbb{R}^d,\;x\neq y.
\end{equation}

Let $N\in\mathbb{N}$, $N\geq 2$. We consider the space $V_\rho \left(\left[\frac{1}{N},N\right]\right)$ consisting in all $g\in C\left(\left[\frac{1}{N},N\right]\right)$ such that
\begin{equation*}
    V^N_\rho(g) = \sup_{\substack{ \frac{1}{N}< t_\ell < t_{\ell-1} < \dots < t_1 <N\\ \ell\in\mathbb{N}}}
    \left(\sum_{i = 1}^{\ell-1} |g(t_{i+1}) - g(t_i)|^\rho \right)^{\frac{1}{\rho}} < \infty.
\end{equation*}

By identifying those functions that differ in a constant $\left(V_\rho \left(\left[\frac{1}{N},N\right]\right),V^N_\rho \right)$ is a Banach space.
Assume that $f\in C_c^\infty(\mathbb{R}^d)$. Let $x\in \mathbb{R}^d$. We define, for every $y\in \mathbb{R}^d$, the function $F_x(y)\in C\left(\left[\frac{1}{N},N\right]\right)$ defined by
\begin{equation*}
    [F_x(y)](t) = f(y) \mathfrak{U}^{(0,\infty),0}_{k,j}(x-y,t),\quad  t\in \left[\tfrac{1}{N},N\right].
\end{equation*}

By \cite[Lemma~4]{BCCFR} we get
\begin{equation*}
    \begin{split}
    V^N_\rho(F_x(y)) & \leq C
    \int_{\frac{1}{N}}^N \int_0^\infty 
    \left| \partial_t \left[
    t^{k+1} \partial^k_t\left( 
    t e^{-\frac{t^2}{4u}}
    \right)\right]\right|
    \frac{|x_j - y_j|}{u^{\frac{d}{2}+ \frac{5}{2}}}
    e^{-\frac{|x-y|^2}{2u}} du dt
    \\ & \leq C 
    \int_{\frac{1}{N}}^N \int_0^\infty 
    \left( \frac{t^{k+1}}{u^{\frac{k}{2}}}
    +  \frac{t^{k}}{u^{\frac{k-1}{2}}}   \right)
    \frac{e^{-\frac{t^2}{8u}}}{u^{\frac{d}{2}+ 2}}
    e^{-c\frac{|x-y|^2}{u}} du dt
    \\ & \leq C
    \int_0^\infty 
    \frac{e^{-c\frac{N^{-2}+|x-y|^2}{u}}}{u^{\frac{d+3}{2}}} du
    \\ & \leq 
    \frac{C}{(N^{-2}+|x-y|^2)^{d+1}}, \quad y\in\mathbb{R}^d.
    \end{split}
\end{equation*}
We define the mapping 
\begin{equation*}
    \begin{split}
        F_x : \mathbb{R}^d & \longrightarrow V_\rho\left(\left[\tfrac{1}{N},N\right]\right)
        \\ y & \longrightarrow F_x(y).
    \end{split}
\end{equation*}

$F_x$ is continuous. Indeed, let $y_0\in \mathbb{R}^d$. By Lemma~4 in~\cite{BCCFR} we obtain
\begin{equation*}
    \begin{split}
        V^N_\rho & (F_x(y) - F_x(y_0)) 
        \\ & \leq C
        \int_{\frac{1}{N}}^N \int_0^\infty 
        \frac{\left| \partial_t \left[
        t^{k+1} \partial^k_t\left( 
        t e^{-\frac{t^2}{4u}}
        \right)\right]\right|}{u^{\frac{3}{2}}}
        \left| \frac{x_j-y_j}{u} W_u(x-y) f(y)
        - \frac{x_j - (y_0)_j}{u} W_u(x-y_0) f(y_0)
        \right| du dt
        \\ & \leq C
        \int_{\frac{1}{N}}^N \int_0^\infty 
        \frac{t^k e^{-c\frac{t^2}{u}}}{u^{\frac{k+2}{2}}}
        \left| \frac{x_j-y_j}{u} W_u(x-y) f(y)
        - \frac{x_j - (y_0)_j}{u} W_u(x-y_0) f(y_0)
        \right| du dt
        \\ & \leq C
        \int_{\frac{1}{N}}^N \int_0^\infty 
        \frac{e^{-cv}v^{\frac{k}{2}}}{t^2}
        \left| (x_j-y_j) W_{\frac{t^2}{v}}(x-y) f(y)
        - (x_j - (y_0)_j) W_{\frac{t^2}{v}}(x-y_0) f(y_0)
        \right| dv dt.
    \end{split}
\end{equation*}

Since
\begin{equation*}
    \int_{\frac{1}{N}}^N \frac{1}{t^2}
    \left| (x_j-y_j) W_{\frac{t^2}{v}}(x-y) f(y)
        - (x_j - y_{0,j}) W_{\frac{t^2}{v}}(x-y_0) f(y_0)
        \right|dt \leq 
        C v^{{\frac{d-1}{2}}},
        \quad y\in\mathbb{R}^d\text{ and } v>0,
\end{equation*}
the dominated convergence theorem leads to 
\begin{equation*}
    \lim_{y\rightarrow y_0} V_{\rho}^N (F_x(y) - F_x(y_0)) = 0.
\end{equation*}

Since $F_x$ is continuous, it is $V_\rho\left(\left[\tfrac{1}{N},N\right]\right)$-strongly measurable by Pettis' Theorem. Indeed, $F_x$ is weakly measurable. Furthermore, if $\mathbb{Q}$ represents the set of rational numbers, we have that $\displaystyle F_x(\mathbb{R}^d)= \overline{F_x(\mathbb{Q}^d)}^{V_\rho\left(\left[\tfrac{1}{N},N\right]\right)}$.

By using~\eqref{4.1} we get
\begin{equation*}
    \int_{\mathbb{R}^d} \left\|
    \mathfrak{U}^{(0,\infty),0}_{k,j}(x-y,\cdot)
    \right\|_{V^N_\rho} |f(y)| dy 
    \leq 
    \int_{\mathbb{R}^d} \frac{|f(y)|}{|x-y|^{d}} dy
    < \infty,\quad x\notin \text{supp}(f).
\end{equation*}

We define
\begin{equation*}
    \widetilde{M}^{(0,\infty),0}_{k,j} (f)(x) 
    = \int_{\mathbb{R}^d} \mathfrak{U}^{(0,\infty),0}_{k,j}(x-y,\cdot) f(y) dy,\quad x\notin \text{supp}(f),
\end{equation*}
where the integral is understood in the $V_\rho$- Bochner sense.

Let $0<a\neq 1$. We define the functional $T_a$ in $V_\rho\left(\left[\tfrac{1}{N},N\right]\right)$ as follows
\begin{equation*}
    T_a(g) = g(a) - g(1).
\end{equation*}

It is clear that $T_a \in V_\rho\left(\left[\tfrac{1}{N},N\right]\right)'$, the dual space of $V_\rho\left(\left[\tfrac{1}{N},N\right]\right)$. We have that
\begin{equation*}
    \begin{split}
        T_a\left( \widetilde{M}^{(0,\infty),0}_{k,j} (f)(x) 
        \right) 
        & = 
        \int_{\mathbb{R}^d} T_a \left(
        \mathfrak{U}^{(0,\infty),0}_{k,j}(x-y,\cdot) 
        \right) f(y) dy
        \\ & = U^{(0,\infty),0}_{k,j} (f)(x,a)
        - U^{(0,\infty),0}_{k,j} (f)(x,1),
        \quad x\notin \text{supp}(f).
    \end{split}
\end{equation*}

We deduce that, for every $x\notin \text{supp}(f)$, there exists $c_x\in\mathbb{R}$ such that 
\begin{equation*}
    \left[ \widetilde{M}^{(0,\infty),0}_{k,j} (f)(x) 
    \right](t) = U^{(0,\infty),0}_{k,j} (f)(x,t) 
    + c_x, \quad t\in (0,\infty),
\end{equation*}
and
\begin{equation*}
    \widetilde{M}^{(0,\infty),0}_{k,j} (f)(x) = U^{(0,\infty),0}_{k,j} (f)(x,\cdot)
\end{equation*}
as elements of $V_\rho\left(\left[\tfrac{1}{N},N\right]\right)$.

Since $\mathfrak{v}^{(0,\infty),0}_{\rho,k,j}$ is bounded from $L^p(\mathbb{R}^d,dx)$ into itself, for every $1<p<\infty$, from~\eqref{4.1} and~\eqref{4.2}we deduce that $\widetilde{M}^{(0,\infty),0}_{k,j}$ is bounded from $L^1(\mathbb{R}^d,dx)$ into
$L_{V_\rho([\tfrac{1}{N},N])}^{1,\infty}\left(\mathbb{R}^d,dx\right)$. Furthermore 
\begin{equation*}
    \sup_{N\in\mathbb{N},N\geq 2} \|\widetilde{M}^{(0,\infty),0}_{k,j}\|_{
    L^1(\mathbb{R}^d,dx)\hookrightarrow 
    L_{V_\rho([1/N, N])}^{1,\infty}(\mathbb{R}^d,dx)
    } <\infty .
\end{equation*}
By using the monotone convergence theorem we conclude that $\mathfrak{v}^{(0,\infty),0}_{\rho,k,j}$ is bounded from 
$L^1(\mathbb{R}^d,dx)$ into $L^{1,\infty}(\mathbb{R}^d,dx)$.

Suppose now that $E \subset (\eta,\infty)$ for some $\eta>0$. By using Lemma~4 in~\cite{BCCFR} we get
\begin{equation*}
    \begin{split}
        \| \mathfrak{U}^{E,0}_{k,j} (x-y,\cdot) \|_{V_\rho}
        & \leq C \int_{E} \int_0^{\infty} 
         \frac{\left| \partial_t \left[
        t^{k+1} \partial^k_t\left( 
        t e^{-\frac{t^2}{4u}}
        \right)\right]\right|}{u^{\frac{3}{2}}}
        \frac{|x_j-y_j|}{u} W_u(x-y) dt du
        \\ & \leq C 
         \int_\eta^\infty \int_0^{\infty} 
         \frac{t^k e^{-c\frac{t^2}{u}}}{u^{\frac{k+4}{2}}} dt
         \, \frac{e^{-c\frac{|x-y|^2}{u}}}{u^{\frac{d-1}{2}}} du
         \\ & \leq \frac{C}{|x-y|^{d-1}}
         \int_\eta^\infty \frac{du}{u^{\frac{3}{2}}} 
         \int_0^{\infty} v^{\frac{k-1}{2}}e^{-v}dv
         \\ & \leq C
         \frac{1 + |x|}{|x-y|^{d-1}},
         \quad  x\neq y.
    \end{split}
\end{equation*}

According to Lemma~\ref{Lem2.5}, the operator $\mathfrak{v}^{E,0}_{\rho,k,j,\text{loc}}$ is bounded from 
 $L^p(\mathbb{R}^d,dx)$ into itself, for every $1<p<\infty$, and from $L^1(\mathbb{R}^d,dx)$ into $L^{1,\infty}(\mathbb{R}^d,dx)$. 
 
 By combining the above results we deduce that if $E \subset(0,\eta)$, for some $\eta>0$, the operator $\mathfrak{v}^{E,0}_{\rho,k,j,\text{loc}}$ is bounded from
 $L^p(\mathbb{R}^d,dx)$ into itself, for every $1<p<\infty$, and from $L^1(\mathbb{R}^d,dx)$ into $L^{1,\infty}(\mathbb{R}^d,dx)$. 

\subsubsection{}
We consider the local variation operator defined by
\begin{equation*}
    \mathfrak v^{E, h}_{\rho,k,j,M,\text{loc}} f(x)
    = V_\rho \left(
    t \rightarrow U^{E, h}_{k,j,M,\text{loc}}(f)(x,t)
    \right), \quad x\in\mathbb{R}^d,
\end{equation*}
where
\begin{equation*}
    U^{E, h}_{k,j,M}(f)(x,t) 
    = \int_{\mathbb{R}^d} f(y) \mathfrak{U}^{E, h}_{k,j,M}
    (x-y,t) dy, \quad x\in\mathbb{R}^d \text{ and }t>0.
\end{equation*}

Minkowski inequality and~\eqref{3.6} allow us, as in Section  \ref{S4.1.2}, to see that the operator  $\mathfrak v^{E, h}_{\rho,k,j,M,\text{loc}}$, when $h>0$, is bounded from
 $L^p(\mathbb{R}^d,dx)$ into itself, for every $1<p<\infty$, and from $L^1(\mathbb{R}^d,dx)$ into $L^{1,\infty}(\mathbb{R}^d,dx)$. 
 
 We are going to study the variation operator $ \mathfrak v^{E, 0}_{\rho,k,j,M,\text{loc}}$. In order to do this we consider firstly the operator $ \mathfrak v^{(0,\infty), 0}_{\rho,k,j,M}$. We recall that
 $$
 \mathfrak U_{k,j,M}^{(0,\infty),0}(z,t)=\frac{t^{k+1/2}}{\Gamma(M)}\partial^k_t\partial_{x_j}\int_0^\infty t^{-M}e^{-u/t}W_u(z)u^{M-1}du,\quad z\in\mathbb R^d\mbox{ and } t>0. 
 $$
 
 We can write
 $$
 \mathfrak U_{k,j,M}^{(0,\infty),0}(z,t)=L_{k,j,M}(z,t),\quad z\in\mathbb R^d\mbox{ and } t>0,
 $$
 where
 $$
 L_{k,j,M}(z,t)=\frac{t^{k+1/2}}{\Gamma(M)}\partial^k_t\partial_{x_j}\int_0^\infty e^{-v}W_{vt}(z)v^{M-1}dv,\quad z\in\mathbb R^d\mbox{ and } t>0.
 $$
 We have that $\mathcal F(\partial_{z_j}W_{vt}(z))(w)=iw_je^{-\frac{|w|^2tv}{2}}$, $w\in\mathbb R^d$ and $t, v>0$. Then, 
 $$
 \mathcal F(t^{k+1/2}\partial^k_t\partial_{z_j}W_{vt}(z))(w)=\frac{(-1)^k}{2^k}iw_jt^{k+1/2}v^k|w|^{2k}e^{-\frac{tv|w|^2}{2}}, \quad w\in\mathbb R^d\mbox{ and }t,v>0,
 $$
 and we get
 \begin{align*}
 \mathcal{F}(U_{k,j,M}^{(0,\infty ),0}(\cdot,t))(w)&=\mathcal{F}(f*L_{k,j,M}(\cdot ,t))(w)=-i\frac{w_j}{|w|}\mathcal{F}(f)(w)\Psi_{k,t}(w)\\
 &=\mathcal{F}(R_jf)(w)\mathcal{F}(\mathcal{F}^{-1}(\Psi_{k,t}))(w),\quad w\in \mathbb{R}^d \mbox{ and }t>0,
 \end{align*}
 where $R_j$ denotes the $j$-th Euclidean Riesz transformation and
 $$
 \Psi_{k,t}(w)=\frac{(-1)^{k+1}}{2^k\Gamma (M)}t^{k+\frac{1}{2}}\int_0^\infty e^{-v} v^{k+M-1}|w|^{2k+1}e^{-\frac{tv|w|^2}{2}}dv,\quad w\in \mathbb{R}^d\mbox{ and }t>0.
 $$
  We consider the function $\psi_k(w)=|w|^{2k+1}e^{-a|w|^2}$, $w\in\mathbb R^d$, where $a>0$. We define the Hankel transform $h_\nu$ by
 $$
 h_\nu(g)(s)=\int_0^\infty(sr)^{-\nu}J_\nu(sr)g(r)r^{2\nu +1}d\nu,\quad s\in (0,\infty),
 $$
 where $J_\nu$ denotes the Bessel function of the first kind and order $\nu >-1$. Since $\psi_k$ is a radial function we obtain
 $$
 \mathcal F^{-1}(\psi_k)(x)=\frac{1}{(2\pi )^d}h_{\frac{d-2}{2}}(\widetilde{\psi}_k)(|x|),\quad x\in\mathbb R^d,
 $$
 where $\widetilde{\psi}_k(r)=r^{2k+1}e^{-ar^2}$, $r\in (0,\infty)$. According to \cite[(14), p. 30]{EMOT2} we get
 $$
 \mathcal{F}^{-1}(\psi_k)(s)=c_ka^{-\frac{d+1}{2}-k}\;_1F_1\Big(\frac{d+1}{2}+k,\frac{d}{2};-\frac{s^2}{4a}\Big),\quad s\in (0,\infty),
 $$
 for certain $c_k>0$. Here $_1F_1$ represents the Kummer's confluent hypergeometric function (see, for example, \cite[\S 9]{Leb}).

Thus,
\begin{align*}
\mathcal F^{-1}(\Psi_{k,t})(x)&=\frac{(-1)^{k+1}2^{\frac{d+1}{2}}c_k}{\Gamma (M)}t^{-\frac{d}{2}}\int_0^\infty e^{-v} v^{M-1-\frac{d+1}{2}}\;_1F_1\Big(\frac{d+1}{2}+k,\frac{d}{2};-\frac{|x|^2}{2tv}\Big)dv,\quad x\in \mathbb{R}^d,
\end{align*}
and we can write
$$
U_{k,j,M}^{(0,\infty ),0}(f)(x,t)=\big((\phi_k)_{\sqrt{t}}*R_jf\big)(x),\quad x\in \mathbb{R}^d\mbox{ and }t>0,
$$
where
$$
\phi _k(x)=\frac{(-1)^{k+1}2^{\frac{d+1}{2}}c_k}{\Gamma (M)}\int_0^\infty e^{-v}v^{M-1-\frac{d+1}{2}}\;_1F_1\Big(\frac{d+1}{2}+k,\frac{d}{2};-\frac{|x|^2}{2v}\Big)dv,\quad x\in \mathbb{R}^d.
$$
By taking into account \cite[(9.11.2)]{Leb} with the adequate correct form (see also \cite[Theorem 1]{Pa}), for every $\alpha,\gamma\in \mathbb{R}$, with $\gamma \not=0,-1,-2,...$, we have that $\;_1F_1(\alpha,\gamma;z)=e^z\;_1F_1(\gamma-\alpha,\gamma; -z)$, $z\in \mathbb{R}$. Then,
$$
\lim_{z\rightarrow +\infty}\;_1F_1\Big(\frac{d+1}{2}+k,\frac{d}{2};-z\Big)=0.
$$
Since $M>\frac{d+1}{2}$, we infer by the dominated convergence Theorem that $\lim_{|x|\rightarrow +\infty}\phi_k(x)=0$.

On the other hand let us consider the function 
$$
\widetilde{\phi}_k(r)=\frac{(-1)^{k+1}2^{\frac{d+1}{2}}c_k}{\Gamma (M)}\int_0^\infty e^{-v}v^{M-1-\frac{d+1}{2}}\;_1F_1\Big(\frac{d+1}{2}+k,\frac{d}{2};-\frac{r^2}{2v}\Big)dv,\quad r\in (0,\infty).
$$
It is clear that $\phi_k(x)=\widetilde{\phi}_k(|x|)$, $x\in \mathbb{R}^d$.

Again by \cite[(9.11.2)]{Leb} we deduce that
\begin{align*}
    \int_0^\infty \big|\frac{d}{dr}\widetilde{\phi}_k(r)\big|r^ddr&\leq C\int_0^\infty e^{-v}v^{M-2-\frac{d+1}{2}}\int_0^\infty r^{d+1}\Big|\;_1F_1\Big(\frac{d+1}{2}+k+1,\frac{d}{2}+1;-\frac{r ^2}{2v}\Big)\Big|drdv\\
    &\leq C\int_0^\infty e^{-v}v^{M-2-\frac{d+1}{2}}\left(\int_0^{\sqrt{v}}+\int_{\sqrt{v}}^\infty \right)r^{d+1}
    \Big|\;_1F_1\Big(\frac{d+1}{2}+k+1,\frac{d}{2}+1;-\frac{r^2}{2v}\Big)\Big|drdv\\
    &\leq C\left(\int_0^\infty e^{-v}v^{M-\frac{3}{2}}dv+\int_0^\infty e^{-v}v^{M-2-\frac{d+1}{2}}\int_{\sqrt{v}}^\infty r^{-2k-2}v^{\frac{d+1}{2}+k+1}dr dv\right)\\
    &=C\int_0^\infty e^{-v}v^{M-\frac{3}{2}}dv<\infty.
\end{align*}

From \cite[Lemma 2.4]{CJRW} we deduce that the variation operator $V_\rho (t\rightarrow (\phi_k)_{\sqrt{t}}*f)$ is bounded from $L^p(\mathbb{R}^d,dx)$ into itself, for every $1<p<\infty$, and from $L^1(\mathbb{R}^d,dx)$ into $L^{1,\infty }(\mathbb{R}^d,dx)$. Since $R_j$ is bounded from $L^p(\mathbb{R}^d,dx)$ into itself, for every $1<p<\infty$, we deduce that $U_{\rho ,k,j,M}^{(0,\infty ),0}$ is bounded from $L^p(\mathbb{R}^d,dx)$ into itself, for every $1<p<\infty$.

In order to prove that $\mathfrak v_{\rho ,k,j,M}^{(0,\infty ),0}$ is bounded from $L^1(\mathbb{R}^d,dx)$ into $L^{1,\infty }(\mathbb{R}^d,dx)$ we can use the vector-valued Calder\'on-Zygmund theory by using the techniques developed to prove the corresponding property for the operator $\mathfrak v_{\rho,k,j,{\rm loc}}^{(0,\infty),0}$ in section \ref{S4.3.1}. 

Suppose that $E\subset (\eta,\infty )$ for some $\eta>0$. From \eqref{3.6} it follows that 
$$
\left\|\mathfrak{U}_{k,j,M}^{E, 0}(x,y,\cdot )\right\|_{V_\rho}\leq C\int_E |\partial_{x_j}W_u(x-y)|\frac{du}{\sqrt{u}}\leq \frac{C}{|x-y|^{d-1}}\int_\eta ^\infty \frac{du}{u^{\frac{3}{2}}}\leq C\frac{1+|x|}{|x-y|^{d-1}},\quad x,y\in \mathbb R^d,\;x\neq y.
$$
By using Lemma \ref{Lem2.5} we prove that $\mathfrak v_{\rho ,k,j,M,{\rm loc}}^{E, 0}$ is bounded from $L^p(\mathbb{R}^d,dx)$ into itself, for every $1<p<\infty$, and from $L^1(\mathbb{R}^d,dx)$ into $L^{1,\infty }(\mathbb{R}^d,dx)$.

The above properties allow us to conclude that $\mathfrak v_{\rho ,k,j,M,{\rm loc}}^{E, 0}$ is bounded from $L^p(\mathbb{R}^d,dx)$ into itself, for every $1<p<\infty$, and from $L^1(\mathbb{R}^d,dx)$ into $L^{1,\infty }(\mathbb{R}^d,dx)$ provided that $E \subset (0,\eta)$, for some $\eta >0$.

\begin {rem}
Note that the properties proved in this section for maximal operators and Littlewood-Paley functions hold for every subset $E$ of $(0,\infty )$. However, we prove the properties for variation operators when $E\subset (0,\eta)$ or $E\subset (\eta, \infty )$, for some $\eta >0$.
\end{rem}

\end{document}